\newtheorem{theorem}{Theorem}[section]
\newtheorem{corollary}[theorem]{Corollary}
\newtheorem{lemma}[theorem]{Lemma}
\newtheorem{proposition}[theorem]{Proposition}
\numberwithin{equation}{section}
\newcommand{\R}{\mathbb{R}}
\newcommand{\C}{\mathbb{C}}
\newcommand{\T}{\mathbb{T}}
\newcommand{\dis}{\displaystyle}
\begin{document}

\title[Weighted energy problem on the unit circle]
{Weighted energy problem on the unit circle}%
\author{Igor E. Pritsker}%

\address{Department of Mathematics, Oklahoma State University, Stillwater, OK
74078, U.S.A.}%
\email{igor@math.okstate.edu}

\subjclass[2000]{Primary 31A15; Secondary 30C85, 45E05}%
\keywords{Weighted capacity, potentials, external fields, equilibrium measure.}%



\begin{abstract}

We solve the weighted energy problem on the unit circle, by
finding the extremal measure and describing its support.
Applications to polynomial and exponential weights are also
included.

\end{abstract}

\maketitle


\section{Introduction and main results}

Let $w\not\equiv 0$ be a continuous nonnegative function on the
unit circle $\T:=\{z:|z|=1\}$, and set
\begin{align} \label{1.1}
Q(z):= - \log w(z).
\end{align}
Let ${\mathcal M}(\T)$ be the space of positive unit Borel
measures supported on $\T$. For any measure $\mu\in {\mathcal
M}(\T)$, we define the energy functional
\begin{align} \label{1.2}
I_{w} (\mu)&:= \int\!\!\int \log \dis\frac{1}{|z-t|w(z)w(t)} \ d
\mu(z)d \mu(t) \\ \nonumber &= \int\!\!\int \log \dis
\frac{1}{|z-t|} \ d \mu(z)d \mu(t) + 2 \int Q(t)\, d\mu(t),
\end{align}
and consider the minimum energy problem
\begin{equation} \label{1.3}
V_{w}:= \dis\inf_{\mu \in {\mathcal M}(\T)} I_{w}(\mu).
\end{equation}
For a general reference on potential theory with external fields,
or weighted potential theory, one should consult the book of Saff
and Totik \cite{ST}. It follows from Theorem I.1.3 of \cite{ST}
that $V_{w}$ is finite, and there exists a unique equilibrium
measure $\mu_{w} \in {\mathcal{M}}(\T)$ such that $I_{w} (\mu_{w})
= V_{w}$. Thus $\mu_w$ minimizes the energy functional \eqref{1.2}
in presence of the external field $Q$ generated by the weight $w$.
Furthermore, we have for the potential of $\mu_w$ that
\begin{equation} \label{1.4}
U^{\mu_{w}}(z)+Q(z) \geq F_{w}, \qquad z \in \T,
\end{equation}
and
\begin{equation} \label{1.5}
U^{\mu_{w}}(z)+Q(z) = F_{w}, \qquad z \in S_w,
\end{equation}
where $U^{\mu_w}(z):=-\dis\int\log|z-t|\,d\mu_w(t), \ F_{w}:=V_{w}
- \dis\int Q(t) d\mu_{w}(t)$ and $S_w:={\rm supp}\, \mu_{w}$ (see
Theorems I.1.3 and I.5.1 in \cite{ST}) . The weighted capacity of
$\T$ is defined by
\begin{equation} \label{1.6}
\textup{cap}(\T,w):=e^{-V_w}.
\end{equation}
If $w\equiv 1$ on $\T$, then we obtain the classical logarithmic
capacity $\textup{cap}(\T)=1,$ and the equilibrium measure
$dt/(2\pi),\ e^{it}\in\T.$

The energy problems with external fields on subsets of the real
line were treated in many papers, see \cite{ST} for a survey and
references. The purpose of this paper is twofold: we provide a
general solution to the weighted energy problem on the unit
circle, and we also simplify the previously known arguments used
in the real line case. Our method applies on the real line too,
which leads to shorter proofs and generalizations of the results
in \cite{DKM} and \cite{ST}.

We give below an explicit form of the equilibrium measure and
describe its support for the weighted energy problem on $\T$.
Throughout the paper, we use the notation $Q(t):=Q(e^{it}).$

\begin{theorem} \label{thm1.1}
Suppose that $Q\in C^{1+\varepsilon}(U),$ where $U$ is an open
neighborhood of $S_w$ in $\T.$ Then
$d\mu_w(e^{i\theta})=f(\theta)\, d\theta,$ where $f\in
L_{\infty}([0,2\pi)).$ Furthermore, the density $f(\theta)$
satisfies the equation
\begin{equation} \label{1.7}
f^2(\theta) = \left(\frac{Q'(\theta)}{\pi}\right)^2 -
\frac{1}{\pi^2}\int_0^{2\pi} Q'(t)f(t)\cot\frac{\theta-t}{2}\,dt +
\frac{1}{4\pi^2}
\end{equation}
for a.e. $e^{i\theta}\in S_w,$ where the integral in \eqref{1.7}
is understood in the principal value sense.
\end{theorem}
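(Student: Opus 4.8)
The plan is to recast the Euler--Lagrange conditions \eqref{1.4}--\eqref{1.5} in terms of Fourier series and the conjugate function on $\T$, where the quadratic nature of the energy makes the equilibrium density explicit. Writing $z=e^{i\theta}$, $t=e^{i\tau}$, we have $\log|z-t|=\log|2\sin\frac{\theta-\tau}{2}|=-\sum_{n\ge 1}n^{-1}\cos n(\theta-\tau)$, so if $d\mu_w=f\,d\theta$ then $U^{\mu_w}(e^{i\theta})=\pi\sum_{n\ne 0}|n|^{-1}\widehat f(n)\,e^{in\theta}$, and as soon as $f\in L^2$ the map $\theta\mapsto U^{\mu_w}(e^{i\theta})$ is absolutely continuous with derivative $-\pi(Hf)(\theta)$, where $(Hg)(\theta):=\frac{1}{2\pi}\,\mathrm{p.v.}\!\int_0^{2\pi}g(\tau)\cot\frac{\theta-\tau}{2}\,d\tau$. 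Hence the first task is to show that $\mu_w$ is absolutely continuous with density in $L^2$ (one in fact expects $f\in\bigcap_{p<\infty}L^p$), and I expect this to be the main obstacle; it is the place where $Q\in C^{1+\varepsilon}(U)$ is genuinely used. A workable route: $\mu_w$ carries no atoms, since an atom in $S_w$ would force $U^{\mu_w}=+\infty$ there, contradicting \eqref{1.5} and the finiteness of $Q$ on $U\supset S_w$; one then views $U^{\mu_w}$ as the harmonic extension to $\{|z|<1\}$ of its boundary trace $U^{\mu_w}|_\T$, uses the reflection identity $U^{\mu_w}(1/\bar z)=U^{\mu_w}(z)+\log|z|$ (valid because $\supp\mu_w\subset\T$) to express $f$ through the interior radial derivative of $U^{\mu_w}$ on $\T$, invokes boundary regularity of harmonic functions to control $f$ on the interiors of the subarcs composing $S_w$ (there $U^{\mu_w}|_\T=F_w-Q\in C^{1+\varepsilon}$, while off $S_w$ it is real-analytic), and finally uses the one-sided inequality \eqref{1.4}, which excludes a ``hard edge'', in a barrier argument at the finitely many endpoints of $\partial S_w$. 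Alternatively one can transplant to $\T$ the regularity results of \cite{DKM} or \cite{ST}. For everything that follows only $f\in L^2$ is needed; boundedness is recovered at the end.

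Granting $f\in L^2$, I would next differentiate the equilibrium equality. On the closed set $\widetilde S:=\{\theta:e^{i\theta}\in S_w\}$ one has $U^{\mu_w}(e^{i\theta})+Q(\theta)\equiv F_w$, with $Q\in C^1$ near $S_w$; since the derivative of an absolutely continuous function that is constant on $\widetilde S$ vanishes a.e.\ on $\widetilde S$, differentiating along $\T$ gives $(Hf)(\theta)=Q'(\theta)/\pi$ for a.e.\ $\theta\in\widetilde S$. The second, purely algebraic, ingredient is an identity on $\T$: since $f\in L^2$, the analytic projection $f_+:=\sum_{n\ge 0}\widehat f(n)e^{in\theta}$ lies in the Hardy space $H^2$, so $(f_+)^2\in H^1$ and the imaginary part of $(f_+)^2$ is the conjugate function of its real part; substituting $f_+=\tfrac12(f+iHf+\widehat f(0))$, separating the real and imaginary parts of $(f_+)^2$, and applying $H$ once more (using $H^2=-\mathrm{id}+\widehat{(\cdot)}(0)$) yields
\[
2H(f\cdot Hf)=(Hf)^2-f^2+\widehat f(0)^2\qquad\text{a.e.\ on }\T .
\]
Because $\mu_w$ is a unit measure, $\widehat f(0)=\frac{1}{2\pi}\int_0^{2\pi}f\,d\theta=\frac{1}{2\pi}$, so the additive constant equals $\frac{1}{4\pi^2}$.

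It remains to combine these. Since $f$ vanishes off $\widetilde S$, the relation $(Hf)=Q'/\pi$ on $\widetilde S$ gives $f\cdot Hf=\pi^{-1}Q'f$ everywhere on $\T$, whence $2H(f\cdot Hf)(\theta)=2\pi^{-1}H(Q'f)(\theta)=\pi^{-2}\,\mathrm{p.v.}\!\int_0^{2\pi}Q'(t)f(t)\cot\frac{\theta-t}{2}\,dt$; and for $\theta\in\widetilde S$ also $(Hf)^2(\theta)=(Q'(\theta)/\pi)^2$. Substituting into $f^2=(Hf)^2-2H(f\cdot Hf)+\widehat f(0)^2$ produces exactly \eqref{1.7} for a.e.\ $e^{i\theta}\in S_w$. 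Finally, to upgrade $f$ to $L^\infty$ one bootstraps \eqref{1.7}: its right-hand side involves only $Q'$ (bounded near $S_w$) and a Hilbert-type transform of $Q'f$, so $f\in L^p(\T)$ forces $f\in L^{2p}$ on $\widetilde S$ (and $f\equiv 0$ off $\widetilde S$), giving $f\in\bigcap_{p<\infty}L^p$; then, writing $H(Q'f)=Q'\cdot Hf+[H,Q']f$ (for a $C^\varepsilon(\T)$ extension of $Q'$) and using that the Calderón commutator $[H,Q']$ maps $L^p$ into $L^\infty$ once $p>1/\varepsilon$, together with $Hf=Q'/\pi$ on $\widetilde S$, one sees that the right-hand side of \eqref{1.7} is bounded on $\widetilde S$, so that $f\in L^\infty([0,2\pi))$.
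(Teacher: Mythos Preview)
Your derivation of the quadratic identity $2H(f\cdot Hf)=(Hf)^2-f^2+\widehat f(0)^2$ via $(f_+)^2\in H^1$ is the paper's argument in different clothing: the paper writes $H(z)=\frac{1}{2\pi}\int_0^{2\pi} f(t)\frac{e^{it}+z}{e^{it}-z}\,dt$, observes that $-iH^2$ is analytic in $D$, and reads off the same identity from the real and imaginary parts on the boundary. Your differentiation step $Hf=Q'/\pi$ a.e.\ on $S_w$ and the substitution $f\cdot Hf=\pi^{-1}Q'f$ (using $f\equiv 0$ off $S_w$) also coincide with the paper line by line.

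The divergence is in the regularity step, and there is a genuine gap in your sketch. You invoke ``a barrier argument at the finitely many endpoints of $\partial S_w$,'' but under only $Q\in C^{1+\varepsilon}(U)$ nothing yet forbids $S_w$ from being a Cantor-type set; the finite-arc structure is established only later (Corollary~\ref{cor1.2}) under the stronger $C^2$ hypothesis, and that corollary \emph{uses} Theorem~\ref{thm1.1}. Your interior-of-arcs boundary-regularity argument likewise presupposes that $S_w$ has nonempty interior arcs, which is not a priori known. The paper avoids all structural assumptions on $S_w$: it first modifies $w$ to $v$ on $\T\setminus S_w$ so that $\log v\in C^{1+\varepsilon}(\T)$ globally while preserving $\mu_v=\mu_w$ and the variational inequalities; then solves the Dirichlet problem in $D$ with boundary data $\log v+F_w$, so that the solution $u\in C^{1+\varepsilon}(\overline D)$ by Privalov; and finally proves directly that $U^{\mu_w}$ is Lipschitz in all of $\C$ by combining the comparison $u\le U^{\mu_w}$ with an area mean-value estimate of G\"otz type. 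This yields $f\in L^\infty$ at the outset, with no input on the geometry of $S_w$, and makes your closing bootstrap through the commutator $[H,Q']$ unnecessary.
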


\begin{corollary} \label{cor1.2}
Theorem \ref{thm1.1} also holds with \eqref{1.7} replaced by
\begin{equation} \label{1.8}
f^2(\theta) = \frac{1}{\pi^2}\int_0^{2\pi} (Q'(\theta)- Q'(t))f(t)
\cot\frac{\theta-t}{2}\,dt - \left(\frac{Q'(\theta)}{\pi}\right)^2
+ \frac{1}{4\pi^2}
\end{equation}
for a.e. $e^{i\theta}\in S_w.$

Let $p(\theta)$ be the right hand side of \eqref{1.8}. If $Q\in
C^2(U)$ then $p\in C(U)$ and $f:=\sqrt{p}\in C(S_w).$ Furthermore,
$S_w$ is the closure in $\T$ of the open set $\{e^{i\theta}\in\T:
p(\theta)>0\}$. Hence $f(\theta)$ vanishes at the endpoints of
$S_w.$

Moreover, if $Q$ is real analytic on $U$, then $S_w$ is a finite
union of closed arcs of $\T$.
\end{corollary}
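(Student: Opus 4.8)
The plan is to show that, when $Q$ is real analytic on $U$, the function $p$ from Corollary~\ref{cor1.2} (the right--hand side of \eqref{1.8}) extends to a real analytic function on an open neighborhood of $S_w$ in $\T$. Granting this, the conclusion follows quickly. We may assume $S_w\neq\T$, the case $S_w=\T$ being trivial. Since $p=f^2$ on $S_w$ with $\int_{S_w}f\,d\theta=1$, we have $p\not\equiv0$; moreover $p$ has only finitely many zeros in the compact set $S_w$, for otherwise $p$ would vanish along a sequence accumulating at some $\theta^*\in S_w$, hence $p\equiv0$ near $\theta^*$ and, by the identity theorem, on the whole connected component of its analyticity domain containing $\theta^*$, so that $\{p>0\}$ would omit a neighborhood of $\theta^*$, contradicting $\theta^*\in S_w=\overline{\{e^{i\theta}:p(\theta)>0\}}$ (Corollary~\ref{cor1.2}). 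Finally, every connected component $I_j$ of the open set $\{p>0\}$ is an arc with $\overline{I_j}\subset\overline{\{p>0\}}=S_w$, so its endpoints lie in $S_w$ and are zeros of $p$ there (an endpoint is a limit of points where $p>0$ but does not itself lie in $\{p>0\}$, so $p=0$ at it); since there are only finitely many such zeros and each is an endpoint of at most two components, $\{p>0\}$ has finitely many components, and $S_w=\bigcup_j\overline{I_j}$ is a finite union of closed arcs.

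It remains to prove that $p$ is real analytic near $S_w$. As $Q$ is real analytic on $U$ and $S_w$ is compact, $Q$, and hence $Q'$, extends holomorphically to a complex neighborhood $W$ of $S_w$, so the terms $-(Q'(\theta)/\pi)^2+1/(4\pi^2)$ of \eqref{1.8} are restrictions of functions holomorphic near $S_w$. The only real issue is the integral term
\[
p_1(\theta):=\int_0^{2\pi}\bigl(Q'(\theta)-Q'(t)\bigr)f(t)\cot\frac{\theta-t}{2}\,dt=\int_{S_w}K(\theta,t)\,f(t)\,dt,\qquad K(\theta,t):=\bigl(Q'(\theta)-Q'(t)\bigr)\cot\frac{\theta-t}{2}.
\]
The key observation is that, even though $f$ is merely bounded (vanishing like a square root at the endpoints of $S_w$), the kernel $K$ is tame as a function of $\theta$: for each fixed $t$, the simple pole of $\cot\frac{\theta-t}{2}$ at $\theta=t$ is cancelled by the zero of $Q'(\theta)-Q'(t)$ there, so $\theta\mapsto K(\theta,t)$ continues holomorphically across $\theta=t$. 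Rotating so that $S_w\subset\{e^{i\theta}:\delta\le\theta\le2\pi-\delta\}$ for some $\delta>0$, and shrinking $W$, the remaining poles of the cotangent are avoided, and $K(\cdot,t)$ is holomorphic on $W$ for every $t\in S_w$, with $|K(\theta,t)|\le M$ for all $\theta\in W$ and $t\in S_w$, where $M$ is independent of $t$. Since $f\in L^1$ and $K(\cdot,t)$ is holomorphic and uniformly bounded, $p_1(\theta)=\int_{S_w}K(\theta,t)f(t)\,dt$ is holomorphic on $W$, by Morera's and Fubini's theorems (equivalently, by differentiating under the integral sign). Therefore $p=\frac{1}{\pi^2}p_1-(Q'/\pi)^2+\frac{1}{4\pi^2}$ is real analytic on $W\cap\T$, which completes the proof.

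The main obstacle is the holomorphy of the integral term $p_1$: the argument hinges on recognizing that the divided difference $Q'(\theta)-Q'(t)$ exactly absorbs the singularity of the conjugation kernel $\cot\frac{\theta-t}{2}$, producing a kernel $K$ that is holomorphic in $\theta$ and bounded \emph{uniformly} in $t\in S_w$; this uniform bound is precisely what makes the low regularity of $f$ irrelevant. The one routine point requiring care is that uniform bound on $K$, obtained by splitting $S_w$ according to whether $t$ is near or far from the point at which analyticity is being verified (near the diagonal one uses the cancellation; away from it the cotangent is bounded).
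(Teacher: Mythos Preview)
Your proposal addresses only the final ``Moreover'' clause of the corollary (the real analytic case), citing the earlier conclusions of Corollary~\ref{cor1.2} as already established. The paper's proof of that clause proceeds the same way: it asserts that real analyticity of $Q$ implies real analyticity of $p$, and then invokes the identity theorem (infinitely many arcs would force infinitely many zeros of $p$ on the compact set $S_w$, hence $p\equiv0$). Your write-up is a faithful and more detailed version of exactly this argument; in particular, your holomorphic-kernel analysis of $p_1$ is the content the paper compresses into ``It is immediate to see now that the real analyticity of $Q(t)$ on $U$ implies that of $p(t)$.'' So for the part you treat, the approach is essentially the paper's.

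What is missing is everything before the ``Moreover'': the derivation of \eqref{1.8} from \eqref{1.7}, the continuity of $p$ on $U$ when $Q\in C^2$, and---crucially---the identification $S_w=\overline{\{p>0\}}$, which you use but do not prove. In the paper this last step is not automatic: it relies on the identity (3.6),
\[
p=\frac{2}{\pi}Q'\tilde f-\Bigl(\frac{Q'}{\pi}\Bigr)^2-2(f\tilde f)\,\tilde{}+\frac{1}{4\pi^2}=f^2-\Bigl(\tilde f-\frac{Q'}{\pi}\Bigr)^2\quad\text{on }U,
\]
together with \eqref{3.5} and the fact that $f=0$ off $S_w$, which shows that $p\le0$ on $U\setminus S_w$ and $p=f^2\ge0$ on $S_w$. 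Without this decomposition one does not know that $p$ cannot be positive outside $S_w$, and your argument that endpoints of $\{p>0\}$ are zeros of $p$ in $S_w$ would be unjustified. If your intent was only to supply the real analytic step, the proof is correct and matches the paper; if it was meant to cover the full corollary, these earlier pieces must be filled in.
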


\begin{corollary} \label{cor1.2a}
If $S_w=\T$ under the assumptions of Theorem \ref{thm1.1}, then
\[
f(\theta) =  \frac{1}{2\pi} - \frac{1}{2\pi^2}\int_0^{2\pi} Q'(t)
\cot\frac{\theta-t}{2}\,dt, \qquad e^{i\theta}\in \T.
\]
In particular, $f$ is H\"{o}lder continuous on $\T.$
\end{corollary}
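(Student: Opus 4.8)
The plan is to use that, when $S_w=\T$, the equilibrium identity \eqref{1.5} holds on the whole circle, so it may be differentiated globally and the resulting singular integral equation inverted in closed form. Write $d\mu_w(e^{it})=f(t)\,dt$; since $|e^{i\theta}-e^{it}|=2|\sin\frac{\theta-t}{2}|$ and $\mu_w(\T)=1$,
\[
U^{\mu_w}(e^{i\theta})=-\log 2-\int_0^{2\pi}\log\Bigl|\sin\tfrac{\theta-t}{2}\Bigr|\,f(t)\,dt .
\]
By \eqref{1.5} the left-hand side equals $F_w-Q(\theta)$ for every $\theta$, so the logarithmic potential $v(\theta):=\int_0^{2\pi}\log|\sin\frac{\theta-t}{2}|\,f(t)\,dt$ coincides with $Q(\theta)-\log 2-F_w$; in particular $v\in C^{1+\varepsilon}(\T)$ and $v'=Q'$. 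On the other hand $f\in L^\infty\subset L^2$, so the conjugate function $\widetilde f(\theta):=\frac1{2\pi}\,\mathrm{p.v.}\!\int_0^{2\pi} f(t)\cot\frac{\theta-t}{2}\,dt$ belongs to $L^2(\T)$; and since $\log|\sin\tfrac{\theta}{2}|=-\log 2-\sum_{k\ge1}\frac{\cos k\theta}{k}$, a comparison of the Fourier coefficients of $v=\log|\sin(\cdot/2)|\ast f$ shows $v'=\pi\widetilde f$ a.e. Hence $\widetilde f=Q'/\pi$ a.e.\ on $\T$.

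It remains to invert the conjugation operator $g\mapsto\widetilde g$. On $L^2(\T)$ it multiplies the $n$-th Fourier coefficient by $-i\,\mathrm{sgn}\,n$, so $\widetilde{\widetilde g}=-g+\widehat g(0)$; moreover $\widehat f(0)=\frac1{2\pi}\int_0^{2\pi} f=\frac1{2\pi}$. Applying $g\mapsto\widetilde g$ to $\widetilde f=Q'/\pi$ therefore gives $-f+\frac1{2\pi}=\frac1\pi\,\widetilde{Q'}$, that is,
\[
f(\theta)=\frac1{2\pi}-\frac1\pi\,\widetilde{Q'}(\theta)=\frac1{2\pi}-\frac1{2\pi^2}\,\mathrm{p.v.}\!\int_0^{2\pi}Q'(t)\cot\frac{\theta-t}{2}\,dt ,
\]
which is the asserted formula; the right-hand side being continuous, it serves as the canonical representative of the density.

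For the last statement, $Q\in C^{1+\varepsilon}$ gives $Q'\in C^{\varepsilon}(\T)$, and by the classical Privalov theorem the conjugation operator maps $C^{\varepsilon}(\T)$ into itself; hence $\widetilde{Q'}$, and therefore $f$, is H\"older continuous on $\T$. I expect the only point needing real care to be the identity $v'=\pi\widetilde f$ — that differentiating the logarithmic potential produces precisely the principal-value singular integral a.e.\ — which, once one knows $v\in C^1$ from \eqref{1.5}, reduces to the Fourier-coefficient comparison above; everything else is routine, modulo the citation of Privalov's theorem.
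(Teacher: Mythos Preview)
Your proof is correct and follows essentially the same route as the paper: from the identity $\tilde f=Q'/\pi$ on $\T$ (the paper's \eqref{3.5}, which you rederive via Fourier coefficients rather than the paper's Fubini/FTC argument in the proof of Theorem~\ref{thm1.1}), apply the Hilbert inversion formula $\widetilde{\widetilde g}=-g+\widehat g(0)$ together with $\widehat f(0)=1/(2\pi)$, and then invoke Privalov's theorem for the H\"older regularity. The only cosmetic difference is that the paper simply cites \eqref{3.5} from the earlier proof, whereas you reprove it; the inversion and regularity steps are identical.
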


We mention another instance when the structure of $S_w$ is clear,
which parallels the real line case (see Theorem IV.1.10(b) of
\cite{ST}).

\begin{proposition} \label{prop1.3}
If $Q(t)$ is convex on $(\alpha,\beta)\subset\R,\ \beta-\alpha\le
2\pi$, then the intersection of $S_w$ with the arc
$(e^{i\alpha},e^{i\beta})\subset\T$ is either an arc or empty set.
\end{proposition}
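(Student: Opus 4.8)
The plan is to argue by contradiction from the convexity of $\theta\mapsto U^{\mu_w}(e^{i\theta})+Q(\theta)$ on any sub-arc of $(e^{i\alpha},e^{i\beta})$ that misses $S_w$, which is the exact analogue of the classical real-line argument (Theorem IV.1.10(b) of \cite{ST}). Suppose $S_w\cap(e^{i\alpha},e^{i\beta})$ is neither empty nor an arc. Then it is disconnected as a subset of the open arc $(e^{i\alpha},e^{i\beta})$, so there are reals $\alpha<\theta_1<\theta_0<\theta_2<\beta$ with $e^{i\theta_1},e^{i\theta_2}\in S_w$ and $e^{i\theta_0}\notin S_w$. Since $\T\setminus S_w$ is open and the component of $e^{i\theta_0}$ cannot cross the points $e^{i\theta_1},e^{i\theta_2}\in S_w$, that component is an open arc $(e^{ia},e^{ib})$ with $\theta_1\le a<\theta_0<b\le\theta_2$, $e^{ia},e^{ib}\in S_w$, and $(e^{ia},e^{ib})\cap S_w=\emptyset$; moreover $[a,b]\subset(\alpha,\beta)$, so $Q$ is finite, continuous and convex on $[a,b]$.

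The crucial computation is the circle analogue of $\frac{d^2}{dx^2}(-\log|x-t|)=(x-t)^{-2}>0$: writing $|e^{i\theta}-e^{it}|=2|\sin\frac{\theta-t}{2}|$, one gets, for $e^{i\theta}\ne e^{it}$,
\[
\frac{d^2}{d\theta^2}\bigl(-\log|e^{i\theta}-e^{it}|\bigr)=\frac{1}{4\sin^2\frac{\theta-t}{2}}>0 .
\]
For $\theta$ in a compact subset of $(a,b)$ the points $e^{it}$ with $e^{it}\in S_w$ stay at a positive distance from $e^{i\theta}$, so differentiation under the integral sign is legitimate and
\[
\bigl(U^{\mu_w}(e^{i\theta})\bigr)''=\int_{S_w}\frac{d\mu_w(e^{it})}{4\sin^2\frac{\theta-t}{2}}>0,\qquad e^{i\theta}\in(e^{ia},e^{ib}).
\]
Hence $g(\theta):=U^{\mu_w}(e^{i\theta})+Q(\theta)$ is convex on $(a,b)$, being the sum of the strictly convex $C^2$ function $\theta\mapsto U^{\mu_w}(e^{i\theta})$ and the convex function $Q$; under the hypotheses of Theorem \ref{thm1.1} the density $f$ is bounded, so $U^{\mu_w}$ is continuous on $\T$ and $g$ is continuous (hence convex) on $[a,b]$.

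Now I bring in the equilibrium conditions. By \eqref{1.5}, $g(a)=g(b)=F_w$ since $e^{ia},e^{ib}\in S_w$. A convex function on $[a,b]$ lies below the chord joining its endpoint values, so $g\le F_w$ on $[a,b]$; combined with \eqref{1.4}, which gives $g\ge F_w$ on all of $\T$, this forces $g\equiv F_w$ on $[a,b]$. But then $Q(\theta)=F_w-U^{\mu_w}(e^{i\theta})$ for $e^{i\theta}\in(e^{ia},e^{ib})$, so $Q$ is a $C^2$ function of $\theta$ there with $Q''(\theta)=-\bigl(U^{\mu_w}(e^{i\theta})\bigr)''<0$, contradicting the convexity of $Q$ on $(\alpha,\beta)\supset(a,b)$. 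This shows $S_w\cap(e^{i\alpha},e^{i\beta})$ is connected, i.e., an arc or the empty set.

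I expect the only genuinely delicate step to be the passage to the endpoints $e^{ia},e^{ib}$ of the gap: one needs $g$ to extend continuously to $[a,b]$ and to take the value $F_w$ there. This is handled by the boundedness of the density in Theorem \ref{thm1.1} (which makes $U^{\mu_w}$ continuous on $\T$) together with the continuity of $Q$ on $(\alpha,\beta)$; alternatively, since a bounded convex function on $(a,b)$ has one-sided limits at $a$ and $b$, one can choose $a,b$ as limits of points of $S_w$ where \eqref{1.5} holds and pass to the limit. Everything else reduces to the elementary fact that a strictly convex function cannot be constant on an interval, exactly as in the real-line case.
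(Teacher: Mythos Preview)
Your proof is correct and follows essentially the same route as the paper's: both compute $u''(\theta)=\frac{1}{4}\int\csc^2\frac{\theta-t}{2}\,d\mu_w(t)>0$ on gap arcs, find a gap $(\theta_1,\theta_2)$ with endpoints in $S_w$, and use convexity of $u+Q$ together with \eqref{1.4}--\eqref{1.5} to reach a contradiction. The only cosmetic difference is that the paper invokes \emph{strict} convexity to conclude $u+Q<F_w$ on the open gap directly (contradicting \eqref{1.4}), whereas you first deduce $g\equiv F_w$ and then extract $Q''<0$; also, your appeal to Theorem~\ref{thm1.1} for continuity of $U^{\mu_w}$ is unnecessary, since continuity of $U^{\mu_w}+Q$ on $\T$ is part of the general theory cited around \eqref{1.4}--\eqref{1.5}.
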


The support $S_w$ plays crucial role in determining the
equilibrium measure $\mu_w$ itself, as well as other components of
this weighted energy problem. Indeed, if $S_w$ is known then
$\mu_w$ can be found as a solution of the singular integral
equation
\[
\int\log\frac{1}{|z-\zeta|}\,d\mu(\zeta)-\log w(z) = F, \qquad z
\in S_w,
\]
where $F$ is a constant (cf. \eqref{1.5} and \cite[Ch. IV]{ST}).
Applying the results of \cite{Mus} (see also \cite{Mu41}), we
solve this integral equation and obtain the following theorem.

\begin{theorem} \label{thm1.4}
Let $Q\in C^{2}(U),$ where $U$ is an open neighborhood of $S_w$ in
$\T.$ Assume that $S_w$ consists of $K$ arcs $\Gamma_k\subset \T$,
$K\ge 1$, with the endpoints $a_k=e^{i\alpha_k}$ and
$b_k=e^{i\beta_k}$ such that $
\alpha_1<\beta_1<\alpha_2<\beta_2<\ldots<\alpha_K<\beta_K,\ \
\beta_K-\alpha_1 < 2\pi$. Set $R(z):=\prod_{k=1}^K
(z-a_k)(z-b_k),$ and consider the branch of $\sqrt{R(z)}$ defined
in the domain ${\overline\C}\setminus S_w$ by $\lim_{z\to\infty}
\sqrt{R(z)}/z^K =1.$ By the values of $\sqrt{R(z)}$ on $S_w$, we
understand the limiting values from inside the unit disk.  Let
\begin{equation} \label{1.9}
F(z):=\frac{\sqrt{R(z)}}{\pi i} \int_{S_w}
\frac{g(\zeta)\,d\zeta}{\sqrt{R(\zeta)}(\zeta-z)},
\end{equation}
where
\[
g(e^{it}):=\frac{i}{\pi} Q'(t) + \frac{1}{2\pi},\qquad e^{it}\in
S_w,
\]
and the integral in \eqref{1.9} is the Cauchy principal value.
Then the density $f$ of $\mu_w$ is given by
\begin{equation} \label{1.10}
f(t)=F(e^{it}),\qquad e^{it}\in S_w.
\end{equation}
Furthermore, the following equations are satisfied
\begin{equation} \label{1.11}
\int_{S_w} \frac{z^k g(z)\,dz}{\sqrt{R(z)}} = 0, \qquad
k=0,\ldots,K-1,
\end{equation}
\begin{equation} \label{1.12}
\int_{S_w} f(t)\,dt = 1,
\end{equation}
and
\begin{align} \label{1.13}
\int_{\beta_k}^{\alpha_{k+1}} F(e^{it})\, dt =
\frac{\alpha_{k+1}-\beta_k}{2\pi} +
\frac{Q(\alpha_{k+1})-Q(\beta_k)}{\pi}\,i, \qquad k=1,\ldots,K,
\end{align}
where we assume that $\alpha_{K+1}=\alpha_1+2\pi.$
\end{theorem}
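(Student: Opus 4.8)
The plan is to convert the variational conditions \eqref{1.4}--\eqref{1.5} into a characteristic singular integral equation on the system of arcs $S_w$ and then invert it with the aid of \cite{Mus}. Since $Q\in C^{2}(U)\subset C^{1+\varepsilon}(U)$, Theorem \ref{thm1.1} and Corollary \ref{cor1.2} give $d\mu_w=f\,d\theta$ with $f\in C(S_w)$, $f\ge 0$, and $f$ vanishing like a square root at each endpoint $a_k,b_k$; in particular $f$ lies in the class of densities bounded at all $2K$ endpoints of $S_w$. Differentiating the equilibrium identity $U^{\mu_w}(e^{i\theta})+Q(\theta)=F_w$ --- valid for every $e^{i\theta}\in S_w$ by \eqref{1.4}--\eqref{1.5} and continuity --- along $\T$, and using $\frac{d}{d\theta}\log|e^{i\theta}-e^{it}|=\frac12\cot\frac{\theta-t}{2}$, one obtains
\[
\frac12\,\mathrm{p.v.}\!\int_{S_w}f(t)\cot\frac{\theta-t}{2}\,dt=Q'(\theta),\qquad e^{i\theta}\in S_w .
\]
Passing to $z=e^{i\theta}$, $\zeta=e^{it}$ via $\cot\frac{\theta-t}{2}=i\,\frac{z+\zeta}{z-\zeta}$ and $dt=d\zeta/(i\zeta)$, and splitting off the term $\int_{S_w}\frac{f\,d\zeta}{\zeta}=i\int_{S_w}f\,dt=i$ (this is where \eqref{1.12} enters, being just $\mu_w\in\mathcal M(\T)$), this becomes the characteristic singular integral equation
\[
\frac{1}{\pi i}\,\mathrm{p.v.}\!\int_{S_w}\frac{f(\zeta)\,d\zeta}{\zeta-z}=g(z),\qquad z\in S_w ,
\]
with $g$ exactly as in the statement.

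Next I would apply the inversion theory for the characteristic equation from \cite{Mus} (see also \cite{Mu41}) in the class of densities bounded at all $2K$ endpoints of $S_w$. With $R(z)=\prod_{k=1}^K(z-a_k)(z-b_k)$ and the branch of $\sqrt{R}$ normalized as in the statement, the associated scalar Riemann--Hilbert problem on $S_w$ has index $-K$ in this class, so that $K$ solvability conditions are required; hence the equation is solvable there precisely when the conditions \eqref{1.11} hold, and then its solution is unique and equals the Cauchy-type integral \eqref{1.9}. Since the true density $f$ already lies in this class and solves the equation, the conditions \eqref{1.11} hold automatically and $f(t)=F(e^{it})$ on $S_w$, which is \eqref{1.10}.

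For the gap relations \eqref{1.13} I would read \eqref{1.9} for $z\in\overline\C\setminus S_w$: by \eqref{1.11} it is analytic there, bounded at $\infty$ with $F(\infty)=0$, and by the Muskhelishvili construction it coincides off $S_w$ with the Cauchy transform $\Phi(z):=\frac{1}{\pi i}\int_{S_w}\frac{f(\zeta)\,d\zeta}{\zeta-z}$. For $e^{i\theta}$ in a gap no principal value is needed, and $\frac{e^{it}}{e^{it}-e^{i\theta}}=\frac12+\frac i2\cot\frac{\theta-t}{2}$ together with $\int_{S_w}f\,dt=1$ gives
\[
F(e^{i\theta})=\Phi(e^{i\theta})=\frac{1}{2\pi}+\frac{i}{2\pi}\int_{S_w}f(t)\cot\frac{\theta-t}{2}\,dt .
\]
Integrating over $\theta\in(\beta_k,\alpha_{k+1})$, interchanging the order of integration, and using $\int_{\beta_k}^{\alpha_{k+1}}\cot\frac{\theta-t}{2}\,d\theta=2\log\frac{|e^{i\alpha_{k+1}}-e^{it}|}{|e^{i\beta_k}-e^{it}|}$ and $U^{\mu_w}(e^{i\psi})=-\int_{S_w}\log|e^{i\psi}-e^{it}|f(t)\,dt$, one obtains
\[
\int_{\beta_k}^{\alpha_{k+1}}F(e^{it})\,dt=\frac{\alpha_{k+1}-\beta_k}{2\pi}+\frac{i}{\pi}\bigl(U^{\mu_w}(e^{i\beta_k})-U^{\mu_w}(e^{i\alpha_{k+1}})\bigr).
\]
Since $e^{i\beta_k},e^{i\alpha_{k+1}}\in S_w$, \eqref{1.5} gives $U^{\mu_w}(e^{i\beta_k})-U^{\mu_w}(e^{i\alpha_{k+1}})=Q(\alpha_{k+1})-Q(\beta_k)$, which yields \eqref{1.13}; the case $k=K$ is identical with $\alpha_{K+1}=\alpha_1+2\pi$.

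The hard part is the careful application of the Muskhelishvili inversion: choosing the solution class whose index is $-K$ and which is compatible with the square-root vanishing of $f$ from Corollary \ref{cor1.2}, pinning down the normalization of $\sqrt{R}$ that makes \eqref{1.9} the canonical solution, and extracting precisely the conditions \eqref{1.11}; a lesser technical point is justifying the differentiation of the equilibrium identity along $S_w$, which rests on the H\"older regularity of $f$ in the interior of $S_w$ afforded by $Q\in C^2(U)$.
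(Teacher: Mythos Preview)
Your proposal is correct and follows essentially the same route as the paper: derive the characteristic singular integral equation \(\frac{1}{\pi i}\,\mathrm{p.v.}\!\int_{S_w}\frac{f(\zeta)\,d\zeta}{\zeta-z}=g(z)\) from the differentiated equilibrium identity, invert it via Muskhelishvili in the class vanishing at all endpoints to obtain \eqref{1.9}--\eqref{1.11}, and then deduce \eqref{1.13} from \(U^{\mu_w}(b_k)-U^{\mu_w}(a_{k+1})=Q(\alpha_{k+1})-Q(\beta_k)\). The only cosmetic difference is in the packaging of \eqref{1.13}: the paper integrates \(\frac{d}{dt}U^{\mu_w}(e^{it})\) across the gap and converts the resulting Cauchy integral of \(f\) into \(F(e^{it})\) by a contour argument (using \(F(\infty)=0\) from \eqref{1.11}), whereas you first identify \(F=\Phi\) off \(S_w\) and then integrate---the two computations are equivalent.
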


Note that equations \eqref{1.11}-\eqref{1.13} may be used to find
the endpoints of $S_w.$

\section{Applications}

Consider the weight function
\begin{align} \label{2.1}
w(z):=\prod_{j=1}^J |z-z_j|^{\lambda_j},
\end{align}
where $\lambda_j>0$, $z_j\in\C$ and $z_j\neq 0,\ j=1,\ldots,J$. In
some special cases, such weights on disks were previously treated
in \cite{PV98}, with applications to the weighted polynomial
approximation. In general, one can express the equilibrium measure
$\mu_w$ for $w$ of \eqref{2.1} as a linear combination of harmonic
measures, which was done in \cite{Pr00} and \cite{Pr02}. The
complete solution of the weighted energy problem on the unit
circle given below was first found in \cite{PV04}, in connection
with a number theoretic problem on heights of some subspaces of
polynomials (see \cite{BV} for background).

\begin{theorem} \label{thm2.1}
For the weight $w$ of \eqref{2.1}, the support $S_w$ consists of
$K \le J$ arcs. If $S_w=\T$ then we have that
\begin{align} \label{2.2}
d\mu_w(e^{it}) = \frac{1}{2\pi}\left(1+\sum_{j=1}^J \lambda_j -
\sum_{j=1}^J \lambda_j \frac{\left| |z_j|^2-1 \right|}
{|e^{it}-z_j|^2} \right)dt.
\end{align}
If $S_w\neq\T$ then we set
\[
F_1(z):=\frac{\sqrt{R(z)}}{2\pi} \sum_{j=1}^J \lambda_j
\left(\frac{z_j}{(z_j-z)\sqrt{R(z_j)}} + \frac{\bar
z_j^{-1}}{(\bar z_j^{-1}-z)\sqrt{R(\bar z_j^{-1})}}\right),
\]
where we use the notation of Theorem \ref{thm1.4}. The equilibrium
measure $\mu_w$ is given in this case by
\begin{equation} \label{2.3}
d\mu_{w}(e^{it})=F_1(e^{it})\, dt, \qquad e^{it} \in S_w,
\end{equation}
where the values of $\sqrt{R(z)}$ on $S_w$ are the limiting values
from inside the unit disk.

Furthermore, the endpoints of $S_w$ satisfy the equations
\begin{eqnarray} \label{2.4}
 & \dis\sum_{j=1}^J \lambda_j  \left(\frac{z_j^k}{\sqrt{R(z_j)}} +
\frac{\bar z_j^{-k}}{\sqrt{R(\bar z_j^{-1})}}\right)=0, \qquad
 k=1,\ldots,K-1,\nonumber\\ \\ \nonumber &\dis \sum_{j=1}^J \lambda_j
 \left(\frac{z_j^K}{\sqrt{R(z_j)}} + \frac{\bar z_j^{-K}}{\sqrt{R(\bar
z_j^{-1})}}\right) = 1 + \sum_{j=1}^J \lambda_j,
\end{eqnarray}
and the equations
\begin{equation} \label{2.5}
\int_{\beta_k}^{\alpha_{k+1}} F_1(e^{it})\, dt=0, \qquad
k=1,\ldots,K.
\end{equation}
\end{theorem}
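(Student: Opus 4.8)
The plan is to specialize Corollary \ref{cor1.2a} (case $S_w=\T$) and Theorem \ref{thm1.4} (case $S_w\neq\T$) to the external field of the weight \eqref{2.1}, evaluating the resulting singular integrals by residues. The starting point is the form of $Q'$: since $Q(t)=-\sum_j\lambda_j\log|e^{it}-z_j|$ and $\tfrac{d}{dt}\log|e^{it}-z_j|=\operatorname{Re}\tfrac{ie^{it}}{e^{it}-z_j}$, writing the real part as half the sum with its conjugate (and $\overline{e^{it}}=e^{-it}$) gives
\[
Q'(t)=-\frac12\sum_{j=1}^{J}\lambda_j\left(\frac{ie^{it}}{e^{it}-z_j}+\frac{i\bar z_j^{-1}}{e^{it}-\bar z_j^{-1}}\right).
\]
In particular $Q$ is real analytic on $\T\setminus\{z_j:|z_j|=1\}$; since $w$ vanishes at any $z_j\in\T$ while the density $f$ is bounded (Theorem \ref{thm1.1}), such points stay a positive distance from $S_w$, so $Q$ is real analytic near $S_w$ and $S_w$ is a finite union of closed arcs by Corollary \ref{cor1.2}. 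With this $Q'$, the function $g$ of Theorem \ref{thm1.4} is the rational function $g(z)=\tfrac{1}{2\pi}\bigl(1+\sum_j\lambda_j(\tfrac{z}{z-z_j}+\tfrac{\bar z_j^{-1}}{z-\bar z_j^{-1}})\bigr)$, with simple poles at $z_j$ and $\bar z_j^{-1}$ of residues $\lambda_j z_j/(2\pi)$ and $\lambda_j\bar z_j^{-1}/(2\pi)$, and $g(\infty)=(1+\sum_j\lambda_j)/(2\pi)$.

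When $S_w=\T$, Corollary \ref{cor1.2a} gives $f(\theta)=\tfrac1{2\pi}-\tfrac1\pi\widetilde{Q'}(\theta)$ with $\widetilde{Q'}$ the conjugate function. Using $\cot\tfrac x2=-i\sum_{k\neq0}\operatorname{sgn}(k)e^{ikx}$ and the Taylor series of $\log(1-z_je^{-it})$ (resp.\ $\log(1-e^{it}/z_j)$) one computes $\widetilde{\tfrac{d}{dt}\log|e^{it}-z_j|}$; simplifying with $|e^{it}-z_j|^2=1-2\operatorname{Re}(\bar z_j e^{it})+|z_j|^2$ gives, uniformly in the cases $|z_j|<1$ and $|z_j|>1$, $\widetilde{Q'}(\theta)=\sum_j\lambda_j\frac{\bigl||z_j|^2-1\bigr|}{2|e^{i\theta}-z_j|^2}-\frac12\sum_j\lambda_j$, which is exactly \eqref{2.2}; the H\"older continuity is the one already in Corollary \ref{cor1.2a}.

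When $S_w\neq\T$ (so $K\geq1$) I would apply Theorem \ref{thm1.4}. The key computation is to evaluate, for $z\notin S_w$, the Cauchy integral in \eqref{1.9} by pushing the contour off $S_w$ and out to $\infty$: the residue at $\infty$ vanishes because $\sqrt{R(\zeta)}\sim\zeta^K$ with $K\geq1$, and collecting the residues of $\zeta\mapsto g(\zeta)/(\sqrt{R(\zeta)}(\zeta-z))$ at $z_j$, $\bar z_j^{-1}$ (and at $z$) yields the identity $F(z)=F_1(z)+g(z)$, with $F_1$ as in the statement. On $S_w$ the analytic summand $g$ does not jump, hence drops out of the principal value by Sokhotski--Plemelj, leaving $f(t)=F(e^{it})=F_1(e^{it})$, i.e.\ \eqref{2.3}. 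On a complementary arc $(\beta_k,\alpha_{k+1})$ one has instead $F(e^{it})=F_1(e^{it})+g(e^{it})$, and since $\int_{\beta_k}^{\alpha_{k+1}}g(e^{it})\,dt=\tfrac{\alpha_{k+1}-\beta_k}{2\pi}+\tfrac{Q(\alpha_{k+1})-Q(\beta_k)}{\pi}i$ equals the right-hand side of \eqref{1.13}, that relation becomes \eqref{2.5}. The same residue evaluation shows $\int_{S_w}z^k g(z)/\sqrt{R(z)}\,dz$ equals $2\pi i$ times the residue at $\infty$ of $z^kg(z)/\sqrt{R(z)}$ plus the contributions at $z_j$, $\bar z_j^{-1}$; that residue at $\infty$ is $0$ for $k\leq K-2$ and $(1+\sum_j\lambda_j)/(2\pi)$ for $k=K-1$, so with $m=k+1$ the equations \eqref{1.11} become exactly \eqref{2.4} (and \eqref{1.12} is then redundant).

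The assertion $K\leq J$ is the point I expect to cost the most: analyticity of $Q$ yields only finitely many arcs, so one must use the rational structure of $Q'$. This is most naturally seen from the harmonic-measure representation of $\mu_w$ (as in \cite{Pr00}, \cite{Pr02}), which realizes $\mu_w$ as a combination of at most $J$ measures carried by arcs; alternatively, one counts the admissible solutions of \eqref{2.4}--\eqref{2.5}, using the symmetry $\sqrt{R(\bar z_j^{-1})}=\pm\bigl(\prod_k a_k b_k\bigr)^{1/2}\bar z_j^{-K}\,\overline{\sqrt{R(z_j)}}$, forced by $a_k,b_k\in\T$, to match the $2K$ equations \eqref{2.4}--\eqref{2.5} against the $2K$ endpoint coordinates. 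Apart from this the argument is bookkeeping, the only delicate points being the choice of branch of $\sqrt{R}$ and the Plemelj jump terms.
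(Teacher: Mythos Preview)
Your approach to the formulas \eqref{2.2}--\eqref{2.5} is essentially the paper's: compute $g$ as a rational function, evaluate the Cauchy integrals in Theorem \ref{thm1.4} by residues at $z_j$ and $\bar z_j^{-1}$, and read off \eqref{2.3}--\eqref{2.5} (your Plemelj argument to pass from $F=F_1+g$ off $S_w$ to $f=F_1$ on $S_w$ is a mild rephrasing of the paper's direct contour evaluation at $z\in S_w$). For \eqref{2.2} you take the alternative route the paper mentions---conjugate $Q'$ via Corollary \ref{cor1.2a}---rather than its primary one (invert the full-circle Cauchy integral and apply Plemelj), but the two are equivalent.

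The genuine gap is $K\le J$. Neither of your suggestions is what the paper does, and neither is made precise enough to work: invoking the harmonic-measure representation from \cite{Pr00,Pr02} outsources the point entirely, and ``counting equations versus unknowns'' says nothing about the \emph{number} of arcs---\eqref{2.4}--\eqref{2.5} always give $2K$ equations in the $2K$ endpoints regardless of how large $K$ is. The paper's argument is different and concrete. First, it substitutes the trigonometric form of $Q'$ into \eqref{1.8} to obtain
\[
f^2(\theta)=\frac{L_{2J}(e^{i\theta})}{\pi^2\prod_{j=1}^J|e^{i\theta}-z_j|^4},
\]
with $L_{2J}$ a trigonometric polynomial of degree at most $2J$. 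Second, the residue formula for $f$ gives
\[
f(t)=\frac{\sqrt{R(e^{it})}\,P(e^{it})}{\pi\,e^{iJt}\prod_{j=1}^J|e^{it}-z_j|^2}
\]
for an algebraic polynomial $P$. Comparing the two yields $R P^2=e^{2iJt}L_{2J}$, hence $K+\deg P\le 2J$. Finally, an argument-variation step shows $\deg P\ge K$: on $S_w$ the function $\sqrt{R(e^{it})}P(e^{it})e^{-iJt}$ is nonnegative (it is the density), on $\T\setminus S_w$ it is purely imaginary, and the argument of $\sqrt{R(e^{it})}$ drops by $\pi/2$ at each endpoint while $P(e^{it})e^{-iJt}$ is continuous away from zeros of $P$; positivity on both neighbouring arcs therefore forces $P$ to vanish on every gap. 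Combining, $2K\le K+\deg P\le 2J$. This is the missing idea in your proposal.
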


\bigskip

Our second application is related to the exponential weights of
the form
\begin{equation} \label{2.6}
w(e^{i\theta}) = e^{-t_M(\theta)}, \ \mbox{where}\quad
t_M(\theta):=\sum_{m=-M}^M c_m e^{im\theta}
\end{equation}
is a real valued trigonometric polynomial of degree $M$. A typical
example of such weight is given by $w(z) = |e^{-cz}|,\ c\in\R,$
which was studied in \cite{PV97} and \cite{PV98} on disks and on
the Szeg\H{o} domain. The same weight appeared in a problem on the
longest increasing subsequence of random permutations, see
\cite{BDJ}. The general solution of the weighted energy problem on
the unit circle is given below.

\begin{theorem} \label{thm2.2}
For the weight $w$ of \eqref{2.6}, the support $S_w$ consists of
$K \le M$ arcs. If $S_w=\T$ then we have that
\begin{align} \label{2.7}
d\mu_w(e^{it}) = \left(\frac{1}{2\pi} - \frac{1}{\pi}
\sum_{m=-M}^M m\, {\rm sgn}(m)\, c_m e^{im\theta} \right)dt.
\end{align}
If $S_w\neq\T$ then we set
\[
F_2(z):=\frac{\sqrt{R(z)}}{\pi} \left( \sum_{m=K}^M m c_m
s_{m+1}(z) - \sum_{m=-M}^{-1} m c_m r_{-m-1}(z) \right),
\]
in the notation of Theorem \ref{thm1.4}, where
\[
\frac{1}{\sqrt{R(\zeta)}(\zeta-z)} = \sum_{k=K+1}^{\infty}
\frac{s_k(z)}{\zeta^k} \quad \mbox{and} \quad
\frac{1}{\sqrt{R(\zeta)}(\zeta-z)} = \sum_{k=0}^{\infty} r_k(z)
\zeta^k,
\]
respectively near $\infty$ and near $0$. The measure $\mu_w$ is
given by
\begin{equation} \label{2.8}
d\mu_{w}(e^{it})=F_2(e^{it})\, dt, \qquad e^{it} \in S_w,
\end{equation}
where the values of $\sqrt{R(z)}$ on $S_w$ are the limiting values
from inside the unit disk.

Furthermore, the endpoints of $S_w$ satisfy \eqref{1.11} with
\[
g(z)=\frac{1}{2\pi} - \frac{1}{\pi} \sum_{m=-M}^M m c_m z^m,
\]
and \eqref{1.13} with $F=F_2+g.$
\end{theorem}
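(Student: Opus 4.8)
The plan is to specialize Theorem~\ref{thm1.4} and Corollaries~\ref{cor1.2}, \ref{cor1.2a} to the external field $Q(\theta)=t_M(\theta)$. Since $t_M$ is real analytic, Corollary~\ref{cor1.2} already guarantees that $S_w$ is a finite union of $K\ge1$ closed arcs, so it remains to establish the sharp estimate $K\le M$, the density formulas, and the endpoint relations. Throughout one uses $Q'(\theta)=t_M'(\theta)=\sum_{m=-M}^{M}imc_m e^{im\theta}$; the reality relations $c_{-m}=\overline{c_m}$ make $\sum_m mc_m e^{im\theta}$ purely imaginary, so the data of Theorem~\ref{thm1.4} reduce to the Laurent polynomials $g(z)=\tfrac1{2\pi}-\tfrac1\pi\sum_{m=-M}^{M}mc_m z^m$ (hence $\operatorname{Re}g\equiv\tfrac1{2\pi}$ and $\operatorname{Im}g=Q'/\pi$ on $\T$) and $q(z):=i\sum_m mc_m z^m$, which satisfies $q(e^{i\theta})=Q'(\theta)$.

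The case $S_w=\T$ is immediate from Corollary~\ref{cor1.2a}: inserting $Q'(t)=\sum imc_m e^{imt}$ and applying the conjugate-function identity $\tfrac1{2\pi}\operatorname{p.v.}\!\int_0^{2\pi}e^{imt}\cot\tfrac{\theta-t}{2}\,dt=-i\operatorname{sgn}(m)e^{im\theta}$ (with value $0$ for $m=0$) yields \eqref{2.7} at once.

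For $S_w\ne\T$ I would evaluate the principal-value Cauchy integral in \eqref{1.9} by residues. With $z\notin S_w$ fixed, the function $\zeta\mapsto g(\zeta)/(\sqrt{R(\zeta)}(\zeta-z))$ is meromorphic on $\overline{\C}\setminus S_w$ with poles only at $\zeta=0$, $\zeta=z$, $\zeta=\infty$; by contour deformation and the residue theorem, using that the two boundary values of $1/\sqrt{R}$ on $S_w$ are negatives of one another, one finds $F(z)=g(z)+F_2(z)$, where $F_2(z)=\sqrt{R(z)}\big(\operatorname{Res}_{\zeta=0}+\operatorname{Res}_{\zeta=\infty}\big)\tfrac{g(\zeta)}{\sqrt{R(\zeta)}(\zeta-z)}$. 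Reading the residue at $\infty$ (resp.\ at $0$) off the given expansion of $1/(\sqrt{R(\zeta)}(\zeta-z))$ there, the monomial $z^m$ of $g$ with $K\le m\le M$ contributes $\tfrac1\pi mc_m s_{m+1}(z)$ and that with $-M\le m\le-1$ contributes $-\tfrac1\pi mc_m r_{-m-1}(z)$, which is exactly the claimed $F_2$. Now for $e^{it}\in S_w$ the principal value in \eqref{1.9} equals the mean of the two one-sided limits, hence $f(t)=\tfrac12\big(F^{\mathrm{in}}(e^{it})-F^{\mathrm{out}}(e^{it})\big)$; the jump-free term $g$ cancels, and since $F_2=\sqrt{R}\cdot(\text{Laurent polynomial})$ changes sign across $S_w$ this leaves $f(t)=F_2(e^{it})$ with $\sqrt{R}$ evaluated from inside the disk, which is \eqref{2.8}. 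Equation \eqref{1.11} with this $g$ is precisely the solvability condition of Theorem~\ref{thm1.4}; and since on each complementary arc $(\beta_k,\alpha_{k+1})$ the point $e^{it}$ lies off $S_w$, where $F=F_2+g$ unambiguously and $\sqrt R$ is single-valued, \eqref{1.13} becomes \eqref{1.13} with $F$ replaced by $F_2+g$.

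The hard part --- which I expect to be the main obstacle --- is the bound $K\le M$; here the plan is a zero-counting argument for $\psi(\theta):=(U^{\mu_w}+Q)'(e^{i\theta})$. Put $\Omega(z)=\int d\mu_w(\zeta)/(\zeta-z)$, analytic on $\overline{\C}\setminus S_w$ with $\Omega(z)=-1/z+O(z^{-2})$ at $\infty$, and $\Xi(z):=iz\Omega(z)+q(z)$. Then $\Xi$ has poles of order $\le M$ at $0$ and at $\infty$, is bounded at the $2K$ endpoints of $S_w$ (where $f$ vanishes), both boundary values $\Xi^{\mathrm{in}},\Xi^{\mathrm{out}}$ are purely imaginary on $S_w$ (since $\operatorname{Re}(iz\Omega)=-Q'$ on $S_w$ by \eqref{1.5}), while $\operatorname{Re}\Xi(e^{i\theta})=\psi(\theta)$ on $\T\setminus S_w$. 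Consequently the Schwarz-type combination $\Xi(z)+\overline{\Xi(1/\bar z)}$ has equal one-sided limits with $\sqrt{R(z)}$ on both sides of $S_w$, so $\Upsilon(z):=\big(\Xi(z)+\overline{\Xi(1/\bar z)}\big)/\sqrt{R(z)}$ extends analytically across $S_w$, hence is analytic on $\C\setminus\{0\}$ with poles of order $\le M$ at $0$ and $\le M-K$ at $\infty$ --- that is, a Laurent polynomial in $\operatorname{span}\{z^{-M},\dots,z^{M-K}\}$. On $\T\setminus S_w$ one has $\Xi(e^{i\theta})+\overline{\Xi(e^{i\theta})}=2\psi(\theta)$; moreover $R(e^{i\theta})e^{-iK\theta}$ is a fixed unimodular constant times a real trigonometric polynomial $\tau(\theta)$ of degree $K$ whose sign on all of $\T\setminus S_w$ is the constant $(-1)^K$, so that $\psi(\theta)=c\,|\tau(\theta)|^{1/2}\operatorname{Re}\big(e^{i\delta}e^{iK\theta/2}\Upsilon(e^{i\theta})\big)$ on $\T\setminus S_w$ for a nonzero constant $c$, a fixed constant $\delta$, and $|\tau|^{1/2}$ positive except at the $2K$ endpoints. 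By \eqref{1.4}--\eqref{1.5} and Rolle's theorem $\psi$ vanishes in the interior of each of the $K$ complementary arcs, so the trigonometric expression $\operatorname{Re}\big(e^{i\delta}e^{iK\theta/2}\Upsilon(e^{i\theta})\big)$ has at least $K$ zeros on $\T$; since its frequencies lie in $[-M+K/2,M-K/2]$ it has at most $2M-K$ zeros on $[0,2\pi)$, whence $K\le2M-K$, i.e.\ $K\le M$. Together with Corollary~\ref{cor1.2} this shows $S_w$ consists of $K\le M$ closed arcs, completing the proof.
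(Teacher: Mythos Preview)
Your treatment of the density formulas and the endpoint relations is essentially the paper's: compute $g$ from $Q'$, convert the principal-value integral \eqref{1.9} to a closed contour over both sides of $S_w$, and read off residues at $0$ and $\infty$; Corollary~\ref{cor1.2a} for the case $S_w=\T$; and \eqref{1.11}, \eqref{1.13} for the endpoints. Your Plemelj-averaging derivation of $f=F_2$ on $S_w$ is a clean variant of what the paper does in \eqref{3.15}.

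The one place your argument genuinely diverges is the bound $K\le M$. The paper's route is purely algebraic: from Corollary~\ref{cor1.2} one has $f^2=L_{2M}$, a Laurent polynomial of degree $\le 2M$; from the explicit $F_2$ one has $f=P\sqrt{R}/z^M$ on $S_w$ with $P$ a polynomial; matching gives $K+\deg P\le 2M$. Then a sign/argument-tracking argument (the argument of $\sqrt{R(e^{it})}$ drops by $\pi/2$ across each endpoint, while that of $Pe^{-iMt}$ is continuous) forces $P$ to vanish on the open arc between every two neighboring components of $S_w$, so $\deg P\ge K$ and hence $K\le M$. Your route instead builds $\Upsilon=(\Xi+\overline{\Xi(1/\bar\cdot)})/\sqrt{R}$ via Schwarz reflection of the Cauchy transform, identifies it as a Laurent polynomial in $\mathrm{span}\{z^{-M},\dots,z^{M-K}\}$, and counts zeros of $\psi=(U^{\mu_w}+Q)'$ on $\T\setminus S_w$ (at least $K$ by Rolle, at most $2M-K$ by degree). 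The two are not unrelated---in fact your $\Upsilon$ coincides with $-2\pi i\,P(z)/z^M$ where $P$ is the paper's polynomial---but the mechanisms are different: the paper never invokes Rolle or the variational inequality \eqref{1.4}, while you never use the identity $f^2=p$ from Corollary~\ref{cor1.2}. Your approach is a bit more ``potential-theoretic'' and would transplant to other kernels; the paper's is shorter once $F_2$ is in hand.

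Two points deserve more care in your write-up. First, the passage from $\psi$ to a single trigonometric expression $h(\theta)=\operatorname{Re}(e^{i\delta}e^{iK\theta/2}\Upsilon(e^{i\theta}))$ with a \emph{fixed} $\delta$ hides the branch behavior of $\sqrt{R}$ across the arcs of $S_w$; it is cleaner to note that $2\psi=\operatorname{Re}(\sqrt{R}\,\Upsilon)$ equals $\sqrt{R}$ times a genuine Laurent polynomial $\Lambda\in\mathrm{span}\{z^{-M},\dots,z^{M-K}\}$ on all of $\T\setminus S_w$ (use $\overline{\sqrt{R(1/\bar z)}}=cz^{-K}\sqrt{R(z)}$), and then count zeros of $\Lambda$ directly. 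Second, you must exclude $\Lambda\equiv 0$ (equivalently $\psi\equiv 0$ on $\T\setminus S_w$): if that held, $U^{\mu_w}+Q\equiv F_w$ on all of $\T$, so by the argument of Corollary~\ref{cor1.2a} the density would be the trigonometric polynomial $\tfrac1{2\pi}-\tfrac1\pi(Q')\,\tilde{}$, which, being real analytic and vanishing on the nonempty open set $\T\setminus S_w$, would vanish identically---contradicting $\int f=1$. With these two clarifications your argument is complete.
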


Note that
\[
r_k(z) = \frac{1}{k!}\, \frac{d^k}{d\zeta^k} \left. \left(
\frac{1}{\sqrt{R(\zeta)}(\zeta-z)} \right) \right|_{\zeta=0},\quad
k\ge 0,
\]
and
\[
s_k(z) = \frac{1}{k!}\, \frac{d^k}{d\zeta^k} \left. \left(
\frac{\zeta^{K+1}}{\sqrt{\zeta^{2K}R(1/\zeta)}(1-z\zeta)} \right)
\right|_{\zeta=0},\quad k\ge 0.
\]
Hence $s_k(z)$ is a polynomial in $z$ of degree at most $k$, and
$r_k(z)$ is a polynomial in $1/z$ of degree at most $k+1$. Also,
it is clearly possible to evaluate the integrals in \eqref{1.11}
(for this function $g$) by using the residues at $0$ and $\infty.$

\section{Proofs}

The proof of Theorem \ref{thm1.1} requires the following lemma.

\begin{lemma} \label{lem3.1}
The weighted equilibrium measure $\mu_w$ is absolutely continuous
with respect to the arclength on $\T$, and
\[
d\mu_w(e^{it})=f(t)\,dt, \quad e^{it} \in \T,
\]
where $f\in L_{\infty}\left([0,2\pi)\right).$
\end{lemma}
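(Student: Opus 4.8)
The goal is to show that $\mu_w$ is absolutely continuous with respect to arclength on $\T$, with a bounded density. The plan is to exploit the Euler--Lagrange (variational) conditions \eqref{1.4}--\eqref{1.5} to control the potential $U^{\mu_w}$, and then invoke the smoothing properties of the logarithmic potential. The key structural input will be an upper bound of the form $U^{\mu_w}(z) \le C + \text{Lip}$-type behavior near $\T$, coming from the fact that on $S_w$ one has $U^{\mu_w} = F_w - Q$ with $Q$ at least $C^1$ (in the intended application), and that off $S_w$ the potential is superharmonic and bounded above by $F_w - Q$ as well via \eqref{1.4}.

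First I would recall that $U^{\mu_w}$ is superharmonic on $\C$, harmonic off $S_w\subset\T$, and continuous except possibly on $S_w$; moreover $U^{\mu_w}(z)\to+\infty$ is impossible since $V_w<\infty$ forces $\mu_w$ to have finite energy, hence no atoms, and in fact $U^{\mu_w}$ is bounded above on $\T$ by $F_w - \min_{S_w} Q$ after combining \eqref{1.4} with the lower-semicontinuity/maximum principle (the potential of a measure on $\T$ attains its sup-behavior controlled by its values on $\supp$). The crucial step is then a \emph{local} argument: near a point $e^{i\theta_0}\in\T$, write $\T$ locally as a graph and compare $U^{\mu_w}$ with the potential of its "balayage" onto a small arc, or directly estimate $\mu_w$ of a small arc $I$ of length $\delta$ by testing \eqref{1.5} against two nearby points. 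Concretely, for $e^{i\theta},e^{i\phi}\in S_w$ close together, subtracting \eqref{1.5} at the two points gives $U^{\mu_w}(e^{i\theta}) - U^{\mu_w}(e^{i\phi}) = Q(\phi)-Q(\theta)$, which is $O(|\theta-\phi|)$ when $Q\in C^{1+\varepsilon}$; this Lipschitz control of the potential along $\T$ is exactly what forces the density to be bounded. The standard way to convert "potential is Lipschitz on $\T$" into "$\mu_w \ll dt$ with $L^\infty$ density" is to observe that a measure whose logarithmic potential restricted to $\T$ lies in a Lipschitz (or $C^{1,\varepsilon}$) class cannot charge a short arc more than proportionally to its length: if $\mu_w(I)\ge c\delta\log(1/\delta)^{-1}\cdot(\text{something unbounded})$ one derives a contradiction with the modulus of continuity of $U^{\mu_w}$ using the elementary bound $U^{\mu_w}(z_I) \ge \mu_w(I)\log(1/\delta) + O(1)$ at the center $z_I$ of $I$.

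The cleanest route, which I would actually follow, is via the Cauchy/Hilbert transform on $\T$: on $S_w$ the variational equality \eqref{1.5} reads $-\int\log|e^{i\theta}-e^{it}|\,d\mu_w(t) = F_w - Q(\theta)$; differentiating in $\theta$ (justified because the right side is $C^{\varepsilon}$ and the left side's derivative is the finite Hilbert transform) yields a singular integral equation whose right-hand side $Q'(\theta)$ is in $C^{\varepsilon}$, hence in $L^\infty$. Inverting the finite Hilbert transform on the (open) set where \eqref{1.5} holds — together with the fact that off $S_w$ the density is zero — expresses $f$ in terms of $Q'$ and a principal-value integral of $f$ against $\cot\frac{\theta-t}{2}$; boundedness of $f$ then follows from the $L^p$-boundedness of the conjugate function operator and a bootstrap ($f\in L^2$ automatically from finite energy $\Rightarrow$ the singular integral of $f$ is in $L^2$, but the explicit relation \eqref{1.7}/\eqref{1.8}, once established, shows $f^2$ equals a bounded function plus a principal-value integral which, being a Hilbert transform of an $L^\infty$ function against a smooth-enough kernel, is in BMO, and a second iteration upgrades this to $L^\infty$).

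The main obstacle I anticipate is the \emph{rigorous differentiation of \eqref{1.5} along $\T$} and the justification that the resulting singular integral converges in the principal-value sense — i.e., passing from the integrated identity for $U^{\mu_w}$ to the pointwise equation for $f$ without yet knowing $f$ is better than $L^2$. This requires either (i) a regularization argument (differentiate a mollified version, then pass to the limit using that $Q'\in C^\varepsilon$ controls the error uniformly), or (ii) citing the classical theory of the finite Hilbert transform and its behavior near the endpoints of the arcs of $S_w$ (as in Muskhelishvili's framework, which the paper invokes later for Theorem \ref{thm1.4}). I would handle the endpoints by noting that near an endpoint the density can blow up at worst like an inverse square root, which is still $L^2$ but \emph{not} $L^\infty$ — so a separate argument is needed there, presumably showing via the variational \emph{inequality} \eqref{1.4} just outside $S_w$ (where $U^{\mu_w}+Q \ge F_w$ with equality approached) that the would-be $(\zeta - a_k)^{-1/2}$ singularity is incompatible with $U^{\mu_w}+Q$ staying $\ge F_w$ on the complementary arc; this forces the density to vanish (not blow up) at the endpoints, which is consistent with the last assertion of Corollary \ref{cor1.2}.
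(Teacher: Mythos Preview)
Your chosen route --- differentiate the variational equality \eqref{1.5} and invert the resulting Hilbert-transform equation --- is circular as a proof of Lemma~\ref{lem3.1}. In the paper, Lemma~\ref{lem3.1} is precisely the \emph{prerequisite} that makes the differentiation of \eqref{1.5} legitimate: only after one knows $d\mu_w=f\,dt$ with $f\in L_\infty$ can one write $U^{\mu_w}(e^{i\theta})=\int f(t)\log|2\sin\frac{\theta-t}{2}|\,dt$, invoke Fubini and the Fundamental Theorem of Calculus, and arrive at $\pi\tilde f=Q'$ (this is exactly how \eqref{3.5} is derived in the proof of Theorem~\ref{thm1.1}). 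Your bootstrap also rests on a false step: finite logarithmic energy of $\mu_w$ gives $\sum_{n\neq 0}|\hat\mu_w(n)|^2/|n|<\infty$, not $\mu_w\ll dt$ or $f\in L^2$; singular measures on $\T$ can have finite energy. So there is no starting point from which to iterate to $L^\infty$. The endpoint discussion (ruling out an inverse-square-root blowup via \eqref{1.4}) is a red herring at this stage, since no density is yet known to exist.

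The paper's proof is genuinely different and avoids the circularity. It shows directly that $U^{\mu_w}$ is globally Lipschitz on $\C$; then Theorem~II.1.5 of \cite{ST} expresses $d\mu_w$ as $-\frac{1}{2\pi}(\partial_{{\bf n}_-}U^{\mu_w}+\partial_{{\bf n}_+}U^{\mu_w})\,dt$, and the Lipschitz bound controls both normal derivatives, giving $f\in L_\infty$ immediately. The Lipschitz estimate is obtained by (i) modifying $w$ off $S_w$ so that $\log v\in C^{1+\varepsilon}(\T)$ globally while $\mu_v=\mu_w$; (ii) comparing $U^{\mu_w}$ with the harmonic extension $u$ of $\log v+F_w$ into $D$, which lies in $C^{1+\varepsilon}(\overline D)$ by Privalov and satisfies $u\le U^{\mu_w}$; (iii) for $z\in S_w$ this already gives a one-sided Lipschitz bound, which is reflected to $|\zeta|>1$ via the identity $U^{\mu_w}(1/\bar\zeta)=U^{\mu_w}(\zeta)+\log|\zeta|$; (iv) the matching lower bound comes from an area mean-value inequality argument (after G\"otz); and (v) the estimate is propagated from $S_w$ to all of $\C$ by the maximum principle applied to $h(\xi)=U^{\mu_w}(\xi)-U^{\mu_w}(\xi-z_0+\zeta_0)$. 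Your first paragraph gestures toward this kind of potential-theoretic control, but you explicitly set it aside in favor of the singular-integral route, and that is where the argument breaks down.
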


\begin{proof}
We shall show that $U^{\mu_w}$ is Lipschitz continuous in $\C$,
which implies that the directional derivatives of $U^{\mu_w}$
exist a.e. on $\T$, and
\[
d\mu_w(e^{it})=-\frac{1}{2\pi} \left(\frac{\partial
U^{\mu_w}}{\partial {\bf n}_-}(e^{it}) + \frac{\partial
U^{\mu_w}}{\partial {\bf n}_+}(e^{it})\right)\,dt =: f(t)\,dt
\]
for a.e. $e^{it} \in \T,$ by Theorem II.1.5 of \cite{ST}, where
${\bf n}_-$ and ${\bf n}_+$ are the outer and the inner normals to
$\T$. Clearly, the normal derivatives of $U^{\mu_w}$ are bounded
by the Lipschitz constant, so that we obtain $f\in
L_{\infty}\left([0,2\pi)\right).$

Recall that $Q(t)=-\log w(e^{it})$ is a $C^{1+\varepsilon}$
function in an open neighborhood $U$ of $S_w$ in $\T.$ We can
modify $w(e^{it})$ so that for the resulting function $v(e^{it})$
we still have
\[
U^{\mu_{w}}(z)-\log v(z) = F_{w}, \qquad z \in S_w,
\]
and
\[
U^{\mu_{w}}(z)-\log v(z) \ge F_{w}, \qquad z \in \T,
\]
and we also have $\log v(e^{it})\in C^{1+\varepsilon}(\T).$
Theorem I.3.3 of \cite{ST} then implies that $\mu_v=\mu_w$ and
$F_v=F_w.$ Thus we can work with $v$ instead. Indeed, such
modification is possible by \eqref{1.4} and \eqref{1.5}, because
$S_w$ is a compact set contained in $U$. Hence we can find an open
cover $O\subset U$ for $S_w$, consisting of finitely many open
arcs. Then we set $v|_O=w|_O,$ and modify $w$ to $v$ on
$\T\setminus O$ (consisting of finitely many closed arcs) in such
a way that (cf. \eqref{1.4})
\[
U^{\mu_{w}}(z)-\log v(z) \ge F_{w}, \qquad z \in \T\setminus O,
\]
and $\log v(e^{it})\in C^{1+\varepsilon}(\T).$

Let $u$ be a solution of the Dirichlet problem in the unit disk
$D$ for the boundary data $\log v(e^{it})+F_w$. Then $u\in
C^{1+\varepsilon}(\overline{D})$ by Privalov's theorem (see $\S$5
of Chap. IX in \cite{Go}). Since $u|_{\T}\le U^{\mu_w}|_{\T}$ by
our construction, we obtain that
\[
u(z)\le U^{\mu_w}(z), \qquad z\in \overline{D},
\]
as $U^{\mu_w}$ is superharmonic. In the proof of Lipschitz
continuity of $U^{\mu_w}$, we first consider $z\in S_w$ and
$\zeta\in \C$, and follow an idea of G\"otz \cite{Got}. Since
$u|_{S_w}=U^{\mu_w}|_{S_w}$, it is immediate that
\[
U^{\mu_w}(z) - U^{\mu_w}(\zeta) \le u(z) - u(\zeta) \le
C|z-\zeta|, \quad z\in S_w,\ \zeta\in\overline D,
\]
where $C$ is the Lipschitz constant for $u$ on $\overline D$. Note
that $U^{\mu_w}(1/\bar\zeta)=U^{\mu_w}(\zeta)+\log|\zeta|,\
\zeta\neq 0,$ because $S_w\subset\T.$ Hence we have from the above
estimate that
\begin{equation} \label{3.1}
U^{\mu_w}(z) - U^{\mu_w}(\zeta) \le (C+1)|z-\zeta|, \quad z\in
S_w,\ \zeta\in\C.
\end{equation}
In order to prove a matching estimate from below, we consider a
nearest point $\zeta^*\in S_w$ for $\zeta,$ i.e.,
dist$(\zeta,S_w)=|\zeta-\zeta^*|=:r.$ Then
\begin{align} \label{3.2}
U^{\mu_w}(z) - U^{\mu_w}(\zeta) &= u(z) - u(\zeta^*) +
U^{\mu_w}(\zeta^*) - U^{\mu_w}(\zeta) \\ \nonumber &\ge
-C|z-\zeta^*| + U^{\mu_w}(\zeta^*) - U^{\mu_w}(\zeta), \quad z\in
S_w,\ \zeta\in\C.
\end{align}
Using the area mean-value inequality, we obtain that
\begin{align} \label{3.3}
U^{\mu_w}(\zeta^*) &\ge \frac{1}{\pi(2r)^2}\, \int_{D_{2r}(\zeta^*)}
U^{\mu_w}(x+iy)\,dxdy \\
\nonumber &= \frac{1}{4\pi r^2}\, \int_{D_{2r}(\zeta^*)\setminus
D_r(\zeta)} U^{\mu_w}(x+iy)\,dxdy + \frac{1}{4}\,
U^{\mu_w}(\zeta),
\end{align}
where the second term comes from the mean-value property for the
harmonic function $U^{\mu_w}$ in $D_r(\zeta)$. Note that
$U^{\mu_w}(\xi) \ge U^{\mu_w}(\zeta^*) -(C+1)|\zeta^*-\xi|, \
\xi\in\C,$ by \eqref{3.1}. Hence \eqref{3.3} implies that
\[
U^{\mu_w}(\zeta^*) \ge \frac{4\pi r^2 -\pi r^2}{4\pi r^2}
\left(U^{\mu_w}(\zeta^*)-(C+1)2r\right) + \frac{1}{4}\,
U^{\mu_w}(\zeta),
\]
and that
\[
U^{\mu_w}(\zeta^*) - U^{\mu_w}(\zeta) \ge -6(C+1)r.
\]
Applying this in \eqref{3.2}, we have
\begin{align*}
U^{\mu_w}(z) - U^{\mu_w}(\zeta) &\ge -C|z-\zeta^*| -
6(C+1)|\zeta-\zeta^*| \\ &\ge -C|z-\zeta| - 7(C+1)|\zeta-\zeta^*|
\ge -8(C+1)|z-\zeta|.
\end{align*}
Consequently,
\begin{align} \label{3.4}
|U^{\mu_w}(z) - U^{\mu_w}(\zeta)| \le 8(C+1)|z-\zeta|, \quad z\in
S_w,\ \zeta\in\C.
\end{align}
We now show that \eqref{3.4} is true for any $z,\zeta\in\C.$
Observe that
\[
\sup\{|U^{\mu_w}(z) - U^{\mu_w}(\zeta)|:|z-\zeta|\le\delta,\
z,\zeta\in\C\} = |U^{\mu_w}(z_0) - U^{\mu_w}(\zeta_0)|
\]
for some $z_0,\zeta_0\in\C,\ |z_0-\zeta_0|\le\delta,$ because
\[
\lim_{\stackrel{z,\zeta\to\infty}{|z-\zeta|\le\delta}}(U^{\mu_w}(z)
- U^{\mu_w}(\zeta))=0.
\]
Consider $h(\xi):=U^{\mu_w}(\xi) - U^{\mu_w}(\xi-z_0+\zeta_0),$
which is continuous on $\overline\C$ (cf. Theorem I.5.1 of
\cite{ST}) and harmonic in $\overline\C\setminus S_w.$ By the
maximum-minimum principle, we have
\begin{align*}
|U^{\mu_w}(z_0) - U^{\mu_w}(\zeta_0)|=|h(z_0)| &\le \max_{\xi\in
S_w} |h(\xi)| = |h(\xi_0)| \le 8(C+1)|z_0-\zeta_0|,
\end{align*}
where $\xi_0\in S_w,$ and where the last inequality follows from
\eqref{3.4}. Thus
\[
\sup\{|U^{\mu_w}(z) - U^{\mu_w}(\zeta)|:|z-\zeta|\le\delta,\
z,\zeta\in\C\} \le 8(C+1)\delta,
\]
i.e., $U^{\mu_w}$ is Lipschitz continuous in $\C.$
\end{proof}

\begin{proof}[Proof of Theorem \ref{thm1.1}]
The equilibrium equation \eqref{1.5} and Lemma \ref{lem3.1} give
that
\[
\int_0^{2\pi} f(t)\log\left|2\sin\frac{\theta-t}{2}\right|\,dt =
Q(\theta) - F_w, \qquad e^{i\theta}\in S_w.
\]
Differentiating this equation with respect to $\theta$, we obtain
that
\[
\frac{1}{2}\int_0^{2\pi} f(t)\cot\frac{\theta-t}{2}\,dt =
Q'(\theta)
\]
for almost every $\theta,\ e^{i\theta}\in S_w$.  The justification
of differentiation under the integral is done as in Lemma 2.45 of
\cite{DKM}. Indeed, consider the harmonic conjugate of $f$ (cf.
\cite[Chap. 3]{Gar})
\[
\tilde f(\theta) = \frac{1}{2\pi}\int_0^{2\pi}
f(t)\cot\frac{\theta-t}{2}\,dt,
\]
where the integral is understood in the principal value sense.
Since $f\in L_{\infty}\left([0,2\pi)\right),$ we have that $\tilde
f\in L_p\left([0,2\pi)\right)$ for any $p<\infty$, by M. Riesz's
theorem. From the Fundamental Theorem of Calculus and Fubini's
theorem, we obtain that
\[
\int_0^{2\pi} f(t)\log\left|2\sin\frac{\theta-t}{2}\right|\,dt =
\pi \int_0^{\theta} \tilde f(s)\,ds + c,\qquad e^{i\theta}\in \T,
\]
where $c$ is a constant. It follows that
\[
\pi \int_0^{\theta} \tilde f(s)\,ds = Q(\theta) - F_w - c,\qquad
e^{i\theta}\in S_w.
\]
Using the Fundamental Theorem of Calculus again, we can
differentiate the above equation, so that
\begin{align} \label{3.5}
\tilde f(\theta) = \frac{Q'(\theta)}{\pi} \qquad \mbox{a.e. on }
S_w.
\end{align}

Consider the analytic in $D$ function
\[
H(z):=\frac{1}{2\pi}\int_0^{2\pi}
f(t)\frac{e^{it}+z}{e^{it}-z}\,dt, \qquad z\in D,
\]
and recall that $H(z)=u(z)+i\tilde u(z), \ \tilde u(0)=0,$ where
$u$ and $\tilde u$ have the boundary values (cf. \cite{Gar})
\[
u|_{\T}=f \quad \mbox{ and } \quad \tilde u|_{\T}=\tilde f.
\]
Clearly, the function
\[
-iH^2(z)=2u(z)\tilde u(z) + i(\tilde u^2(z) - u^2(z)), \qquad z\in
D,
\]
is analytic in $D$. Hence the harmonic conjugate of $2u\tilde u$
is
\[
(2u\tilde u)\,\tilde{ } = \tilde u^2 - u^2 + c,
\]
where $c$ is selected by the standard convention $(2u\tilde
u)\,\tilde{ }(0) = 0 = \tilde u^2(0) - u^2(0) + c.$ Since $\tilde
u(0) = 0,$ we have
\[
c=u^2(0)=\left(\frac{1}{2\pi}\int_0^{2\pi} f(t)\,dt\right)^2 =
\left(\frac{\mu_w(\T)}{2\pi}\right)^2 = \frac{1}{4\pi^2}.
\]
Passing to the boundary values, we obtain that
\[
2(f\tilde f)\,\tilde{ } = \tilde f^2 - f^2 + \frac{1}{4\pi^2},
\qquad \mbox{a.e. on } \T,
\]
and that
\begin{align} \label{3.6}
\frac{2}{\pi} Q' \tilde f - \left(\frac{Q'}{\pi}\right)^2 -
2(f\tilde f)\,\tilde{ } + \frac{1}{4\pi^2} = f^2 - \left(\tilde f
- \frac{Q'}{\pi}\right)^2, \qquad \mbox{a.e. on } U.
\end{align}
Observe that the right hand side of \eqref{3.6} gives a
decomposition for the left hand side on $U$ into the positive part
$f^2$ and the negative part $-\left(\tilde f - Q'/\pi \right)^2,$
because of \eqref{3.5} and $f(t)=0,\ e^{it}\not\in S_w.$ Hence
\begin{align} \label{3.7}
f^2(\theta) &= \left(\frac{Q'(\theta)}{\pi}\right)^2 -
\frac{2}{\pi}(f Q')\tilde{ }\,(\theta) + \frac{1}{4\pi^2} \\
\nonumber &= \left(\frac{Q'(\theta)}{\pi}\right)^2 -
\frac{1}{\pi^2}\int_0^{2\pi} Q'(t) f(t)\cot\frac{\theta-t}{2}\,dt
+ \frac{1}{4\pi^2}
\end{align}
for a.e. $\theta$ such that $e^{i\theta}\in S_w.$

\end{proof}

\begin{proof}[Proof of Corollary \ref{cor1.2}]

Using \eqref{3.5}, we obtain that
\begin{align*}
&\frac{1}{\pi^2}\int_0^{2\pi} \left(Q'(\theta)-Q'(t)\right)
f(t)\cot\frac{\theta-t}{2}\,dt -
\left(\frac{Q'(\theta)}{\pi}\right)^2 + \frac{1}{4\pi^2} \\ &=
\frac{Q'(\theta)}{\pi^2} \int_0^{2\pi} f(t)\cot\frac{\theta-t}{2}
\,dt - \frac{1}{\pi^2}\int_0^{2\pi} Q'(t)
f(t)\cot\frac{\theta-t}{2}\,dt -
\left(\frac{Q'(\theta)}{\pi}\right)^2 + \frac{1}{4\pi^2} \\ &=
\left(\frac{Q'(\theta)}{\pi}\right)^2 -
\frac{1}{\pi^2}\int_0^{2\pi} Q'(t) f(t)\cot\frac{\theta-t}{2}\,dt
+ \frac{1}{4\pi^2} \qquad \mbox{for a.e. } e^{i\theta}\in S_w,
\end{align*}
which is the right hand side of \eqref{1.7}.

If $Q\in C^2(U)$ then $p\in C(U)$, because the function
\[
\Phi(\theta,t):=\left(Q'(\theta)-Q'(t)\right)
\cot\frac{\theta-t}{2}
\]
can be extended continuously to $U\times U$ by setting
$\Phi(\theta,\theta) := 2\, Q''(\theta).$ Hence $f$ has a
continuous extension on $S_w$ by \eqref{1.8}, which satisfies
$f(\theta)=\sqrt{p(\theta)},\ e^{i\theta}\in S_w.$ Using this
extension in \eqref{3.6}, we obtain that
\[
p=\frac{2}{\pi} Q' \tilde f - \left(\frac{Q'}{\pi}\right)^2 -
2(f\tilde f)\,\tilde{ } + \frac{1}{4\pi^2} = f^2 - \left(\tilde f
- \frac{Q'}{\pi}\right)^2
\]
everywhere on $U$. Therefore, $S_w=\overline{\{e^{i\theta}:
p(\theta)>0\}}$.

It is immediate to see now that the real analyticity of $Q(t)$ on
$U$ implies that of $p(t)$. If we assume that $S_w$ has infinitely
many arcs, then $p(t)$ must have infinitely many zeros on
$S_w\subset U$, and hence it must vanish identically.

\end{proof}

\begin{proof}[Proof of Corollary \ref{cor1.2a}]

Recall that by the Hilbert inversion formula we have
\[
(\tilde{f})\,\tilde{ } = - f + \frac{1}{2\pi} \int_0^{2\pi}
f(t)\,dt \qquad \mbox{a.e. on } \T,
\]
see \cite[Chap. III]{Gar} and \cite[$\S$28]{Mus}. But the latter
integral is equal to the mass $\mu_w(\T)=1.$ Hence we obtain from
\eqref{3.5} for $S_w=\T$ that
\begin{align*}
f(\theta) = \frac{1}{2\pi} - \frac{1}{\pi}
(Q')\,\tilde{}\,(\theta) = \frac{1}{2\pi} -
\frac{1}{2\pi^2}\int_0^{2\pi} Q'(t) \cot\frac{\theta-t}{2}\,dt,
\end{align*}
for a.e. $e^{it}\in\T.$ Since $Q'$ is H\"{o}lder continuous on
$\T$, we conclude that the same is true for its conjugate
$(Q')\,\tilde{}$ by \cite[Chap. III]{Gar}. Therefore, $f$ has a
 H\"{o}lder continuous extension to $\T$, and the above equation
 holds for all $\theta.$
\end{proof}

\begin{proof}[Proof of Proposition \ref{prop1.3}]

We first note that
\[
u(\theta) := U^{\mu_w}(e^{i\theta}) =
-\int\log|e^{i\theta}-e^{it}|\,d\mu_w(t) =
-\int\log\left|2\sin\frac{\theta-t}{2}\right|\,d\mu_w(t)
\]
is a strictly convex function of $\theta$ on each arc of
$\T\setminus S_w.$ Indeed, we have
\[
u''(\theta)=\frac{1}{4}\int\csc^2\frac{\theta-t}{2}\,d\mu_w(t) >0,
\quad e^{i\theta}\in\T\setminus S_w,
\]
so that the claim follows. Suppose that
$S_w\cap(e^{i\alpha},e^{i\beta})$ is not an arc. Then there are
two points $\theta_1,\theta_2 \in (\alpha,\beta)$ such that
$e^{i\theta_1},e^{i\theta_2}\in S_w$, and $e^{i\theta}\not\in S_w$
for $\theta_1<\theta<\theta_2$. Since $u(\theta)+Q(\theta)$ is a
strictly convex function on $(\theta_1,\theta_2)$, which takes
values $u(\theta_1)+Q(\theta_1)=u(\theta_2)+Q(\theta_2)=F_w$ by
\eqref{1.5}, we have that $u(\theta)+Q(\theta) < F_w$ for
$\theta\in(\theta_1,\theta_2)$. But this contradicts \eqref{1.4},
which is true for any $e^{i\theta}\in\T.$

\end{proof}

\begin{proof}[Proof of Theorem \ref{thm1.4}]

We start with the observation that
\[
\frac{e^{it}+e^{i\theta}}{e^{it}-e^{i\theta}}=i\cot\frac{\theta-t}{2}.
\]
Hence we obtain for the singular Schwarz integral (understood as
the principal value) that
\[
\frac{1}{2\pi}\int_0^{2\pi} f(t) \frac{e^{it}+e^{i\theta}}
{e^{it}-e^{i\theta}} dt = i\tilde f(\theta),\qquad \mbox{for a.e.
} \theta\in[0,2\pi).
\]
It follows from \eqref{3.5} that
\[
\frac{1}{2\pi}\int_0^{2\pi} f(t) \frac{e^{it}+e^{i\theta}}
{e^{it}-e^{i\theta}} dt = \frac{i}{\pi} Q'(\theta), \qquad
\mbox{a.e. on } S_w,
\]
and
\[
\frac{1}{\pi i}\int_0^{2\pi} \frac{f(t)\,d(e^{it})}
{e^{it}-e^{i\theta}} - \frac{1}{2\pi}= \frac{i}{\pi} Q'(\theta),
\qquad \mbox{a.e. on } S_w.
\]
Consequently, the density function $f$ satisfies the following
singular integral equation with Cauchy kernel:
\begin{align} \label{3.8}
\frac{1}{\pi i}\int_{S_w} \frac{f(z)\,dz}{z-\zeta} = \frac{i}{\pi}
Q'(\theta) + \frac{1}{2\pi}, \qquad \zeta=e^{i\theta} \in S_w,
\end{align}
where we set $f(z)=f(e^{it}):=f(t).$ Since $S_w$ consists of
finitely many arcs $\Gamma_k,\ k=1,\ldots,K,$ and $f$ is a
continuous function vanishing at the endpoints of $S_w$ by
Corollary \ref{cor1.2}, we obtain from the results of \cite[Chap.
11, $\S$88]{Mus} (see also \cite{Mu41}) that $f$ must be the
unique solution of \eqref{3.8} given by the singular integral
\begin{align} \label{3.9}
f(z)=\frac{\sqrt{R(z)}}{\pi i} \int_{S_w}
\frac{g(\zeta)\,d\zeta}{\sqrt{R(\zeta)}(\zeta-z)}, \qquad z \in
S_w,
\end{align}
where $g(\zeta)$ denotes the right hand side of \eqref{3.8}, and
$\sqrt{R(z)}$ is defined in the statement of Theorem \ref{thm1.4}.
Furthermore, vanishing of $f$ at the endpoints of $S_w$ implies
that $g$ must satisfy the following moment conditions
\begin{align*}
\int_{S_w} \frac{z^k g(z)\,dz}{\sqrt{R(z)}} = 0, \qquad
k=0,\ldots,K-1,
\end{align*}
see \cite[pp. 251, 256]{Mus} and \cite[p. 14]{Mu41}. Hence
\eqref{1.9}-\eqref{1.11} are proved. Equation \eqref{1.12} simply
expresses the fact that $d\mu_w(e^{it})=f(t)\,dt$ is a probability
measure.

Observe that the equilibrium equation \eqref{1.5} gives
\begin{align} \label{3.10}
U^{\mu_{w}}(a_{k+1})-U^{\mu_{w}}(b_k) =
Q(\beta_k)-Q(\alpha_{k+1}), \qquad k=1,\ldots,K.
\end{align}
On the other hand, we have that
\begin{align*}
U^{\mu_{w}}(a_{k+1})-U^{\mu_{w}}(b_k) &=
\int_{\beta_k}^{\alpha_{k+1}} \frac{d}{dt}\, U^{\mu_{w}}(e^{it})\
dt \\ &= -\frac{1}{2} \int_{\beta_k}^{\alpha_{k+1}} \int_0^{2\pi}
f(u)\cot\frac{t-u}{2}\,du\ dt \\ &= -\frac{1}{2i}
\int_{\beta_k}^{\alpha_{k+1}} \int_0^{2\pi} f(u)
\frac{e^{iu}+e^{it}}{e^{iu}-e^{it}}\,du\ dt \\ &= -\frac{\pi}{i}
\int_{\beta_k}^{\alpha_{k+1}} \left( \frac{1}{\pi i} \int_{S_w}
\frac{F(e^{iu})\,d(e^{iu})}{e^{iu}-e^{it}} - \frac{1}{2\pi}
\right) dt.
\end{align*}
Note that $\dis\lim_{r\to 1-} \sqrt{R(re^{iu})} = - \lim_{r\to 1+}
\sqrt{R(re^{iu})}$ for $e^{iu}\in S_w.$ Thus the limiting boundary
values of $\sqrt{R(z)}$ on the arcs of $S_w$, from inside and
outside the unit circle, are opposite in sign. Hence the same is
true for the function $F(z)$, which is analytic in
$\overline\C\setminus S_w$. Passing to the contour integral over
both sides of the cut $S_w$, we obtain by the Cauchy integral
theorem that
\begin{align*}
\frac{1}{\pi i} \int_{S_w}
\frac{F(e^{iu})\,d(e^{iu})}{e^{iu}-e^{it}} = \frac{1}{2\pi i}
\oint_{S_w} \frac{F(e^{iu})\,d(e^{iu})}{e^{iu}-e^{it}} = F(e^{it})
- \lim_{z\to\infty} F(z).
\end{align*}
The latter limit is equal to 0, in fact, which follows from the
moment conditions \eqref{1.11}. Indeed, we have in a neighborhood
of $\infty$ that
\begin{align*}
F(z) &= \frac{\sqrt{R(z)}}{\pi i} \int_{S_w}
\frac{g(\zeta)\,d\zeta}{\sqrt{R(\zeta)}(\zeta-z)} =
-\frac{\sqrt{R(z)}}{\pi i} \sum_{k=0}^{\infty} z^{-k-1} \int_{S_w}
\frac{\zeta^k g(\zeta)\,d\zeta}{\sqrt{R(\zeta)}} \\ &=
-\frac{\sqrt{R(z)}}{\pi i} \sum_{k=K}^{\infty} z^{-k-1} \int_{S_w}
\frac{\zeta^k g(\zeta)\,d\zeta}{\sqrt{R(\zeta)}},
\end{align*}
which gives $F(\infty)=0.$ Consequently,
\begin{align*}
U^{\mu_{w}}(a_{k+1})-U^{\mu_{w}}(b_k) &= -\frac{\pi}{i}
\int_{\beta_k}^{\alpha_{k+1}} \left( F(e^{it}) - \frac{1}{2\pi}
\right) dt \\ &= -\frac{\pi}{i} \int_{\beta_k}^{\alpha_{k+1}}
F(e^{it})\,dt + \frac{\alpha_{k+1}-\beta_k}{2i}.
\end{align*}
Combining this equation with \eqref{3.10}, we prove \eqref{1.13}.

\end{proof}

\begin{proof}[Proof of Theorem \ref{thm2.1}]

Note that $S_w$ cannot contain the zeros of $w$ by \eqref{1.5},
i.e., we have that
\[
Q(\theta) = -\sum_{j=1}^J \lambda_j \log|e^{i\theta}-z_j|
\]
is infinitely differentiable in a neighborhood of $S_w$ in $\T.$
Thus the results of Section 1 apply here. It is clear that
\[
Q'(\theta) = - \sum_{j=1}^J \frac{\lambda_j r_j
\sin(\theta-\phi_j)}{1+r_j^2-2 r_j \cos(\theta-\phi_j)} = -
\sum_{j=1}^J \frac{\lambda_j r_j
\sin(\theta-\phi_j)}{|e^{i\theta}-z_j|^2},
\]
where $z_j=r_j e^{i\phi_j},\ j=1,\ldots,J.$ Applying elementary
trigonometric identities, we obtain that
\begin{align*}
Q'(\theta) - Q'(t) = \sum_{j=1}^J \frac{2\lambda_j r_j^2
\sin(\theta-t) - 2\lambda_j r_j (1+r_j^2)
\cos\left(\frac{\theta+t}{2}-\phi_j\right) \sin\frac{\theta-t}{2}
}{|e^{i\theta}-z_j|^2 |e^{it}-z_j|^2}
\end{align*}
and that
\begin{align*}
&\left(Q'(\theta) - Q'(t)\right) \cot\frac{\theta-t}{2} \\ &=
\sum_{j=1}^J \frac{2\lambda_j r_j^2 (\cos(\theta-t)+1) - \lambda_j
r_j (1+r_j^2) (\cos(\theta-\phi_j) +
\cos(t-\phi_j))}{|e^{i\theta}-z_j|^2 |e^{it}-z_j|^2}.
\end{align*}
If we insert the above representations in \eqref{1.8}, it becomes
clear that
\[
f^2(\theta) = \frac{L_{2J}(e^{i\theta})}{\pi^2 \prod_{j=1}^J
|e^{i\theta}-z_j|^4}, \qquad e^{i\theta} \in S_w,
\]
where $L_{2J}(e^{i\theta})$ is a trigonometric polynomial of
degree at most $2J.$ Since $L_{2J}(e^{i\theta})$ has at most $4J$
zeros on $[0,2\pi),$ and $S_w=\overline{\{e^{i\theta}:
L_{2J}(e^{i\theta})>0\}}$, we conclude that $S_w$ consists of at
most $2J$ arcs of $\T$. Naturally, $L_{2J}$ and $f$ vanish at the
endpoints of those arcs.

Alternatively, we can write
\begin{align*}
Q'(\theta) &= - \sum_{j=1}^J \frac{\lambda_j r_j
\sin(\theta-\phi_j)}{1+r_j^2-2 r_j \cos(\theta-\phi_j)}=
\frac{1}{2i} \sum_{j=1}^J \frac{\lambda_j (z_j e^{-i\theta} - \bar
z_j e^{i\theta})}{|e^{i\theta}-z_j|^2} \\ &= \frac{1}{2i}
\sum_{j=1}^J \lambda_j \frac{z_j - \bar z_j \zeta^2}
{(\zeta-z_j)(1-\bar z_j \zeta)},\qquad \zeta=e^{i\theta}.
\end{align*}
Thus we obtain from Theorem \ref{thm1.4} that
\begin{align*}
g(\zeta) &= \frac{1}{2\pi} \sum_{j=1}^J \lambda_j \frac{z_j-\bar
z_j \zeta^2}{(\zeta-z_j)(1-\bar z_j \zeta)} + \frac{1}{2\pi} \\
\nonumber &= \frac{1+\sum_{j=1}^J \lambda_j}{2\pi} +
\frac{1}{2\pi} \sum_{j=1}^J \frac{\lambda_jz_j}{\zeta-z_j} +
\frac{1}{2\pi} \sum_{j=1}^J \frac{\lambda_j \bar
z_j^{-1}}{\zeta-\bar z_j^{-1}}.
\end{align*}
This form of $g$ is convenient for evaluation of the integrals as
in \eqref{1.9}.

We first consider the case $S_w=\T$. Then we have from \eqref{3.8}
that
\[
\frac{1}{\pi i}\int_{\T} \frac{f(z)\,dz}{z-\zeta} = g(\zeta),
\qquad \zeta \in \T.
\]
Applying the inversion formula to this Cauchy singular integral,
we obtain
\[
f(z)=\frac{1}{\pi i}\int_{\T} \frac{g(\zeta)\,d\zeta}{\zeta-z},
\qquad z \in \T,
\]
by $\S27$ of \cite{Mus}. It follows from Plemelj's formulas (cf.
\cite[$\S$17]{Mus}) that
\[
f(z) = \lim_{\stackrel{|\xi|<1}{\xi\to z}} \frac{1}{2\pi
i}\int_{\T} \frac{g(\zeta)\,d\zeta}{\zeta-\xi} +
\lim_{\stackrel{|\xi|>1}{\xi\to z}} \frac{1}{2\pi i}\int_{\T}
\frac{g(\zeta)\,d\zeta}{\zeta-\xi},
\]
where both integrals are taken in the counterclockwise direction.
Evaluating the above integrals via residues and passing to the
limits, we immediately have that
\begin{align*}
f(z) = \frac{1+\sum_{j=1}^J \lambda_j}{2\pi} &+ \frac{1}{2\pi}
\sum_{|z_j|>1} \frac{\lambda_jz_j}{z-z_j} + \frac{1}{2\pi}
\sum_{|z_j|<1} \frac{\lambda_j \bar z_j^{-1}}{z-\bar z_j^{-1}} \\
&- \frac{1}{2\pi} \sum_{|z_j|<1} \frac{\lambda_jz_j}{z-z_j} -
\frac{1}{2\pi} \sum_{|z_j|>1} \frac{\lambda_j \bar
z_j^{-1}}{z-\bar z_j^{-1}} \\ = \frac{1+\sum_{j=1}^J
\lambda_j}{2\pi} &+ \frac{1}{2\pi} \sum_{|z_j|>1} \lambda_j \left(
\frac{z_j}{z-z_j}-\frac{\bar z_j^{-1}}{z-\bar z_j^{-1}}\right) \\
&+ \frac{1}{2\pi} \sum_{|z_j|<1} \lambda_j \left(\frac{\bar
z_j^{-1}}{z-\bar z_j^{-1}}-\frac{z_j}{z-z_j}\right).
\end{align*}
Thus \eqref{2.2} follows by a simple algebraic manipulation.
Another proof of \eqref{2.2} can be produced by using Corollary
\ref{cor1.2a} and finding the harmonic conjugate of $Q'$ from its
trigonometric form.

We now assume that $S_w$ consists of $K\ge 1$ proper arcs of $\T$,
and apply \eqref{1.9} of Theorem \ref{thm1.4}. As in the proof of
Theorem \ref{thm1.4}, we observe that the limiting boundary values
of $\sqrt{R(\zeta)}$ on those arcs, from inside and outside the
unit circle, are opposite in sign. Hence the same is true for the
function $g(\zeta)/(\sqrt{R(\zeta)}(\zeta-z))$, which is analytic
in $\overline\C\setminus S_w$ except for the simple poles at $z_j$
and $\bar z_j^{-1}.$ Here, we used the natural extension of
$g(\zeta)$ from $S_w$ to $\C$ as a rational function. Passing to
the contour integral over both sides of the cut $S_w$, and
computing the residues at $z_j$ and $\bar z_j^{-1},$ we obtain
that
\begin{align*}
f(z) &= \frac{\sqrt{R(z)}}{2\pi i} \oint_{S_w}
\frac{g(\zeta)\,d\zeta}{\sqrt{R(\zeta)}(\zeta-z)} \\ &=
\frac{\sqrt{R(z)}}{2\pi} \sum_{j=1}^J
\frac{\lambda_jz_j}{\sqrt{R(z_j)}(z_j-z)} +
\frac{\sqrt{R(z)}}{2\pi} \sum_{j=1}^J \frac{\lambda_j \bar
z_j^{-1}}{\sqrt{R(\bar z_j^{-1})}(\bar z_j^{-1}-z)}, \quad z \in
S_w.
\end{align*}
Thus \eqref{2.3} is proved. Our next goal is to show that the
number of intervals for $S_w$ is at most $J$. Observe that the
previous equation gives
\begin{align*}
f(t) &= \frac{\sqrt{R(e^{it})}}{\pi} \sum_{j=1}^J
\frac{\lambda_j}{2} \left( \frac{-z_j}{\sqrt{R(z_j)}(e^{it}-z_j)}
+ \frac{1}{\sqrt{R(\bar z_j^{-1})}(1-\bar z_j e^{it})} \right) \\
&= \frac{\sqrt{R(e^{it})}P(e^{it})}{\pi\prod_{j=1}^J
(e^{it}-z_j)(1-\bar z_j e^{it})} =
\frac{\sqrt{R(e^{it})}P(e^{it})}{\pi e^{iJt} \prod_{j=1}^J
|e^{it}-z_j|^2}, \quad e^{it} \in S_w,
\end{align*}
where $P$ is an algebraic polynomial. Comparing this with the
previously obtained form for $f^2(t)$, we conclude that
\begin{align} \label{3.11}
R(e^{it})P^2(e^{it})=e^{i2Jt}L_{2J}(e^{it}),
\end{align}
for $e^{it} \in S_w,$ so that these polynomials in $e^{it}$
coincide. It follows that $\deg(RP^2)\le 4J,$ i.e., $2K+2\deg P
\le 4J$ and
\begin{align} \label{3.12}
K + \deg P \le 2J.
\end{align}
Since $S_w=\overline{\{e^{i\theta}: L_{2J}(e^{it})>0\}}$, we have
that $L_{2J}(e^{it})$ takes real values for all $t$, and that
$L_{2J}(e^{it})\le 0$ for $e^{it}\in \T\setminus S_w.$ Equation
\eqref{3.11} and the one before it suggest that
\begin{align} \label{3.13}
\sqrt{R(e^{it})}P(e^{it})e^{-iJt} \ge 0, \qquad e^{it} \in S_w,
\end{align}
and that $\sqrt{R(e^{it})}P(e^{it})e^{-iJt}$ is pure imaginary on
$\T\setminus S_w.$ Assume that $P(e^{it})\neq 0$ for $t\in
(\beta_k, \alpha_{k+1}),$ that is $P$ does not vanish on $\T$
between a pair of neighboring arcs of $S_w.$ Note that the
argument of $\sqrt{R(e^{it})}$ decreases by $\pi/2$ when we pass
over an endpoint of $S_w$ (when moving in the positive direction
on $\T$), while the argument of $P(e^{it})e^{-iJt}$ remains
continuous everywhere on $\T$, except for the zeros of $P$. Hence
we have that the values of $\sqrt{R(e^{it})}P(e^{it})e^{-iJt}$
should have opposite signs on these neighboring arcs of $S_w,$
which immediately contradicts \eqref{3.13}. It follows that
$P(e^{it})$ has a zero on $\T$ between each pair of arcs of $S_w,$
so that $\deg P \ge K.$ Finally, $2K \le K + \deg P \le 2J,$ by
\eqref{3.12}, and $K \le J.$

In the remaining part, we prove \eqref{2.4} and \eqref{2.5}.
Applying Theorem \ref{thm1.4}, we see that $g$ must satisfy the
moment conditions of \eqref{1.11}. Again, these integrals are
found by passing to the contour integrals over both sides of the
cut $S_w$, and computing the residues at $z_j$, $\bar z_j^{-1}$
and $\infty$:
\begin{align*}
\frac{1}{\pi i} \int_{S_w} \frac{z^k g(z)\,dz}{\sqrt{R(z)}} &=
\frac{1}{2\pi i} \oint_{S_w} \frac{z^k g(z)\,dz}{\sqrt{R(z)}} \\
&= \frac{1}{2\pi} \sum_{j=1}^J \lambda_j \left(
\frac{z_j^{k+1}}{\sqrt{R(z_j)}} + \frac{\bar z_j^{-k-1}}
{\sqrt{R(\bar z_j^{-1})}} \right) \\ & - \frac{1+\sum_{j=1}^J
\lambda_j}{2\pi} \lim_{z\to\infty} \frac{z^{k+1}}{\sqrt{R(z)}}.
\end{align*}
Note that
\[
\lim_{z\to\infty} \frac{z^{k+1}}{\sqrt{R(z)}} = \left\{
\begin{array}{ll}
0,\quad k=0,\ldots,K-2,\\
1,\quad k=K-1,
\end{array}
\right.
\]
so that \eqref{2.4} follows from \eqref{1.11}.

We deduce \eqref{2.5} from \eqref{1.13}. For this purpose, we
evaluate $F(z)$ for $z\in\C\setminus S_w,$ by using the residues
at $z_j$, $\bar z_j^{-1}$ and $z$:
\begin{align*}
F(z) &= \frac{\sqrt{R(z)}}{2\pi i} \oint_{S_w}
\frac{g(\zeta)\,d\zeta}{\sqrt{R(\zeta)}(\zeta-z)} \\ &=
\frac{\sqrt{R(z)}}{2\pi} \sum_{j=1}^J
\frac{\lambda_jz_j}{\sqrt{R(z_j)}(z_j-z)} +
\frac{\sqrt{R(z)}}{2\pi} \sum_{j=1}^J \frac{\lambda_j \bar
z_j^{-1}}{\sqrt{R(\bar z_j^{-1})}(\bar z_j^{-1}-z)} + g(z) \\ &=
F_1(z) + g(z).
\end{align*}
Hence
\begin{align} \label{3.14}
\int_{\beta_k}^{\alpha_{k+1}} F_1(e^{it})\, dt +
\int_{\beta_k}^{\alpha_{k+1}} g(e^{it})\, dt =
\frac{\alpha_{k+1}-\beta_k}{2\pi} +
\frac{Q(\alpha_{k+1})-Q(\beta_k)}{\pi}\,i,
\end{align}
where $k=1,\ldots,K,$ by \eqref{1.13}. We next compute that
\begin{align*}
\int_{\beta_k}^{\alpha_{k+1}} g(e^{it})\, dt &=
\int_{\beta_k}^{\alpha_{k+1}} \left(\frac{1}{2\pi} \sum_{j=1}^J
\lambda_j\left(\frac{z_j}{e^{it}-z_j} + \frac{\bar z_j^{-1}}
{e^{it}-\bar z_j^{-1}}\right) + \frac{1+\sum_{j=1}^J
\lambda_j}{2\pi} \right) dt \\ &= \frac{1}{2\pi i} \sum_{j=1}^J
\lambda_j\left(z_j\int_{b_k}^{a_{k+1}} \frac{dz}{z(z-z_j)} + \bar
z_j^{-1}\int_{b_k}^{a_{k+1}} \frac{dz} {z(z-\bar z_j^{-1})}\right)
\\  &+ \frac{\alpha_{k+1}-\beta_k}{2\pi} \left(1+\sum_{j=1}^J
\lambda_j\right).
\end{align*}
It is immediate to see that
\begin{align*}
&\frac{1}{2\pi i} \sum_{j=1}^J
\lambda_j\left(z_j\int_{b_k}^{a_{k+1}} \frac{dz}{z(z-z_j)} + \bar
z_j^{-1}\int_{b_k}^{a_{k+1}} \frac{dz} {z(z-\bar z_j^{-1})}\right)
\\ &= \sum_{j=1}^J \frac{\lambda_j}{2\pi i}
\left(\log\frac{a_{k+1}-z_j}{b_k-z_j} + \log\frac{a_{k+1}-\bar
z_j^{-1}}{b_k-\bar z_j^{-1}} - 2 \log\frac{a_{k+1}}{b_k}\right) \\
&= \sum_{j=1}^J \frac{\lambda_j}{2\pi i}
\left(\log\frac{a_{k+1}|a_{k+1}-z_j|^2}{b_k|b_k-z_j|^2} - 2
\log\frac{a_{k+1}}{b_k}\right) \\ &=
\frac{Q(\alpha_{k+1})-Q(\beta_k)}{\pi}\,i -
\frac{\alpha_{k+1}-\beta_k}{2\pi} \sum_{j=1}^J \lambda_j.
\end{align*}
Substituting the results of the above computations in
\eqref{3.14}, we immediately obtain \eqref{2.5}.

\end{proof}

\begin{proof}[Proof of Theorem \ref{thm2.2}]

We follow essentially the same scheme as in the previous proof.
Note that $Q(\theta) = t_M(\theta)$ is infinitely differentiable
on $\T.$ It is clear that
\[
Q'(\theta) = \sum_{m=-M}^M i m c_m e^{im\theta}
\]
and
\[
g(e^{i\theta})=\frac{1}{2\pi} - \frac{1}{\pi} \sum_{m=-M}^M m c_m
e^{im\theta},
\]
where $g$ is defined in Theorem \ref{thm1.4}. Inserting
$Q'(\theta)$ into \eqref{1.8}, we observe that the right hand side
$p(\theta)$ is a trigonometric polynomial of degree at most $2M$.
Thus we have from Theorem \ref{thm1.1} and Corollary \ref{cor1.2}
that $d\mu_w(e^{i\theta})=f(\theta)\,d\theta$ with
$f^2(\theta)=L_{2M}(e^{i\theta})$, where $L_{2M}(z)$ is a Laurent
polynomial of degree at most $2M.$ It follows that $S_w$ consists
of at most $2M$ arcs, because $L_{2M}$ has at most $4M$ zeros on
$\T$.

When $S_w=\T$, we obtain \eqref{2.7} from Corollary \ref{cor1.2a},
after substituting the conjugate
\[
(Q')\,\tilde{}(\theta) = \frac{1}{2\pi}\int_0^{2\pi} Q'(t)
\cot\frac{\theta-t}{2}\,dt = \sum_{m=-M}^M m\, {\rm sgn}(m)\, c_m
e^{im\theta}.
\]
Suppose that $S_w$ consists of $K\ge 1$ proper arcs of $\T$, and
apply \eqref{1.9} of Theorem \ref{thm1.4}. Using that the limiting
boundary values of $\sqrt{R(\zeta)}$ on $S_w$ are opposite in
sign, we again pass to the contour integral over both sides of the
cut $S_w$:
\begin{align} \label{3.15}
F_2(z) = F(z) &= \frac{\sqrt{R(z)}}{2\pi i} \oint_{S_w}
\frac{g(\zeta)\,d\zeta}{\sqrt{R(\zeta)}(\zeta-z)} \\ \nonumber &=
\frac{\sqrt{R(z)}}{2\pi i} \oint_{S_w}
\frac{1}{\sqrt{R(\zeta)}(\zeta-z)} \left( \frac{1}{2\pi} -
\frac{1}{\pi} \sum_{m=-M}^M m c_m \zeta^m \right)\,d\zeta \\
\nonumber &= - \frac{\sqrt{R(z)}}{\pi} \sum_{m=-M}^M \frac{m
c_m}{2\pi i} \oint_{S_w}
\frac{\zeta^m\,d\zeta}{\sqrt{R(\zeta)}(\zeta-z)}, \quad z \in S_w.
\end{align}
The latter contour integrals are found by the Cauchy integral
theorem and evaluation of residues at $\infty$ and at $0,$ with
the help of the series expansions for
$1/(\sqrt{R(\zeta)}(\zeta-z))$. This immediately gives the stated
form of $F_2(z)=F(z),\ z\in S_w,$ so that \eqref{2.8} follows from
\eqref{1.10}. The expansion coefficients $r_k(z)$ and $s_k(z)$ can
be expressed in the standard way
\[
r_k(z) = \frac{1}{k!}\, \frac{d^k}{d\zeta^k} \left. \left(
\frac{1}{\sqrt{R(\zeta)}(\zeta-z)} \right) \right|_{\zeta=0},\quad
k\ge 0,
\]
and
\[
s_k(z) = \frac{1}{k!}\, \frac{d^k}{d\zeta^k} \left. \left(
\frac{\zeta^{K+1}}{\sqrt{\zeta^{2K}R(1/\zeta)}(1-z\zeta)} \right)
\right|_{\zeta=0},\quad k\ge 0.
\]
It transpires now that $s_k(z)$ is a polynomial in $z$ of degree
at most $k$, and that $r_k(z)$ is a polynomial in $1/z$ of degree
at most $k+1$. Hence we have
\[
F_2(z) =\frac{P(z)\sqrt{R(z)}}{z^M}, \quad z\in S_w,
\]
where $P(z)$ is a polynomial in $z$. Comparing this with the
previously obtained form for $f^2(t)$, we conclude that
\begin{align*}
R(e^{it})P^2(e^{it})=e^{i2Mt}L_{2M}(e^{it}),\qquad e^{it} \in S_w,
\end{align*}
so that these polynomials in $e^{it}$ coincide. It follows that
$\deg(RP^2)\le 4M,$ i.e., $2K+2\deg P \le 4M$ and
\begin{align} \label{3.16}
K + \deg P \le 2M.
\end{align}
Since $S_w=\overline{\{e^{i\theta}: L_{2M}(e^{it})>0\}}$, we have
that $L_{2M}(e^{it})$ takes real values for all $t$, and that
$L_{2M}(e^{it})\le 0$ for $e^{it}\in \T\setminus S_w.$ Therefore,
\begin{align} \label{3.17}
\sqrt{R(e^{it})}P(e^{it})e^{-iMt} \ge 0, \qquad e^{it} \in S_w,
\end{align}
being the density of $\mu_w$, and
$\sqrt{R(e^{it})}P(e^{it})e^{-iMt}$ is pure imaginary on
$\T\setminus S_w.$ Assume that $P$ does not vanish on $\T$ between
a pair of neighboring arcs of $S_w.$ Note again that the argument
of $\sqrt{R(e^{it})}$ decreases by $\pi/2$ when we pass over an
endpoint of $S_w$, while the argument of $P(e^{it})e^{-iMt}$ is
continuous on $\T$, except for possible zeros of $P$. Hence the
values of $\sqrt{R(e^{it})}P(e^{it})e^{-iMt}$ should have opposite
signs on these neighboring arcs of $S_w,$ which immediately
contradicts \eqref{3.17}. It follows that $P(e^{it})$ has a zero
on $\T$ between each pair of arcs of $S_w,$ so that $\deg P \ge
K.$ Finally, $2K \le K + \deg P \le 2M,$ by \eqref{3.16}, and $K
\le M.$

Repeating the same evaluation as in \eqref{3.15}, but for
$z\not\in S_w$, we obtain that
\begin{align*}
F(z) &= \frac{\sqrt{R(z)}}{2\pi i} \oint_{S_w}
\frac{g(\zeta)\,d\zeta}{\sqrt{R(\zeta)}(\zeta-z)} = F_2(z) + g(z).
\end{align*}

\end{proof}

{\bf Acknowledgement.} The author thanks the referee for numerous
suggested improvements.

\end{document}